\theoremstyle{plain}
\newtheorem{Thm}{Theorem}
\newtheorem{Def}[Thm]{Definition}
\newtheorem{remark}[Thm]{Remark}
\date{June 9, 2017 }
\title{Finding Balance:   Split Graphs and Related Classes}
\author{Karen L. Collins\\
\small Dept. of Mathematics and Computer Science\\
\small Wesleyan University\\
\small Middletown CT 06459-0128\\
\small\tt kcollins@wesleyan.edu\
\and
Ann N. Trenk\thanks{
This work was supported by a grant from the Simons Foundation (\#426725, Ann Trenk). } \\
\small Department of Mathematics\\
\small Wellesley College\\
\small Wellesley MA 02481\\
\small\tt atrenk@wellesley.edu
}
\begin{document}
\maketitle

\begin{abstract} 
A graph is a split graph if its vertex set can be partitioned into a clique and a stable set.  A split graph is unbalanced   if there exist two such partitions that are distinct.   Cheng, Collins and Trenk (2016), discovered the following interesting counting fact:   unlabeled, unbalanced split graphs  on $n$ vertices can be placed into a bijection with all unlabeled split graphs on $n-1$ or fewer vertices. In this paper we  translate these concepts and the theorem to different combinatorial settings: minimal set covers, bipartite graphs with a distinguished block and posets of height one. 

\end{abstract}

\bibliographystyle{plain} 

 \bigskip\noindent \textbf{Keywords:  split graph, set cover, bipartite graph, bipartite poset, bijection}

\bigskip
\noindent

\section{Introduction} 
\subsection{Background}
In this paper, we consider unlabeled graphs as in \cite{We01}, that is, two graphs are considered the same if there is an isomorphism between them.  For any graph $G$, the number of vertices in a largest clique is denoted by $\omega(G)$ and the number of vertices in a largest stable (independent) set is denoted $\alpha(G)$. We denote by $G[X]$, the graph induced in $G$ by $X\subseteq V(G)$. A finite graph $G$ is a \emph{split graph} if its vertex set can be partitioned into $K\cup S$ where $G[K]$ is a clique and $G[S]$ is a stable set.  We refer to such a partition as a \emph{split graph partition} or a $KS$-partition of $G$.

It is easy to see from the definition that the complement of a split graph is again a split graph, and that a split graph does not contain a chordless odd cycle on five or more vertices. Therefore by the Strong Perfect Graph Theorem, split graphs are perfect. Indeed, the closely related class of double split graphs are one of the five families of graphs that form the base case of the  inductive proof of the Strong Perfect Graph Theorem, conjectured in \cite{Be61} and proven in  \cite{ChRoSeTh06}. 

In \cite{ChCoTr16}, we categorized split graphs based on their $KS$-partitions and defined \emph{balanced} and \emph{unbalanced} split graphs. The terms  \emph{balanced} and \emph{unbalanced} in Definition~\ref{bal-unbal-def} refer to a split graph $G$ while the terms \emph{$K$-max} and \emph{$S$-max} refer to a particular $KS$-partition of $G$.

 \begin{Def} {\rm
 A split graph $G$  is \emph{balanced} if it has a $KS$-partition satisfying $|K| = \omega (G)$ and $|S| = \alpha(G)$   and \emph{unbalanced} otherwise.     A $KS$-partition is \emph{$S$-max} if $|S| = \alpha(G)$    and \emph{$K$-max} if $|K| = \omega (G)$.     }
 \label{bal-unbal-def}
 \end{Def}

The first and last columns of Figure~\ref{big-fig}  show all nine split graphs on four vertices.  The first column shows a $K$-max partition of the vertices and the last column shows an $S$-max partition.   The graphs in the first eight rows are unbalanced and the graph  in the ninth row is balanced.   As this example illustrates,
 the $KS$-partitions of an unbalanced   split graph are not   unique. The next theorem  follows from the work of Hammer and Simeone \cite{HaSi81} and appears 
in~\cite{Go80}.  
  
\bigskip
  
\begin{Thm} {\rm (Hammer and Simeone)}
 For any $KS$-partition of a split graph $G$, exactly one of the following holds:
 
 (i)  $|K| = \omega(G)$ and $|S| = \alpha(G)$.  \hfill (balanced)
 
 (ii)  $|K| = \omega(G)-1$ and $|S| = \alpha(G)$. \hfill (unbalanced, $S$-max)

(iii)  $|K| = \omega(G)$ and $|S| = \alpha(G)-1$. \hfill (unbalanced, $K$-max)

\smallskip

Moreover, in (ii) there exists $s \in S$ so that $K \cup \{s\}$ is complete and in  (iii) there exists $k \in K$ so that $S \cup \{k\}$ is a stable set.

\label{split-thm}
 \end{Thm}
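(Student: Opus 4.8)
The plan is to first pin down the possible sizes of $K$ and $S$, reducing the statement to a short case analysis, and then to eliminate the one combination of sizes that does not appear on the list. Since $G[K]$ is a clique and $G[S]$ is a stable set, we trivially have $|K| \le \omega(G)$ and $|S| \le \alpha(G)$. In the other direction, any clique of $G$ can contain at most one vertex of the stable set $S$, so a maximum clique has at most $|K| + 1$ vertices, giving $|K| \ge \omega(G) - 1$; symmetrically, any stable set of $G$ contains at most one vertex of the clique $K$, so $|S| \ge \alpha(G) - 1$. Hence $|K| \in \{\omega(G)-1,\ \omega(G)\}$ and $|S| \in \{\alpha(G)-1,\ \alpha(G)\}$, leaving four combinations. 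Three of them are exactly (i), (ii), and (iii), and because each is determined by the pair $(|K|,|S|)$ they are pairwise incompatible; so it suffices to show that the fourth combination, $|K| = \omega(G)-1$ and $|S| = \alpha(G)-1$, cannot occur, and along the way to extract the ``moreover'' statements.

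For the structural step, suppose $|K| = \omega(G) - 1$ and choose a maximum clique $K'$, so $|K'| = \omega(G)$. Since $|K'| > |K|$, $K'$ is not contained in $K$, hence it meets $S$; as it meets $S$ in at most one vertex, there is a unique vertex $s \in S \cap K'$, and then $K' \setminus \{s\} \subseteq K$ has size $\omega(G) - 1 = |K|$, which forces $K' \setminus \{s\} = K$. Thus $K \cup \{s\}$ is complete, which is precisely the ``moreover'' claim in case (ii). Symmetrically, if $|S| = \alpha(G) - 1$ then there is a vertex $k \in K$ with $S \cup \{k\}$ stable, giving the ``moreover'' claim in case (iii).

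Finally, to rule out the fourth combination, assume both $|K| = \omega(G) - 1$ and $|S| = \alpha(G) - 1$. By the previous paragraph there exist $s \in S$ adjacent to every vertex of $K$ and $k \in K$ non-adjacent to every vertex of $S$. Since $S$ and $K$ are disjoint, $s \ne k$; but then the pair $\{s,k\}$ is an edge of $G$ (because $s$ is adjacent to all of $K$) and also a non-edge of $G$ (because $k$ is adjacent to none of $S$), a contradiction. Hence for every $KS$-partition exactly one of (i), (ii), (iii) holds. I expect this last contradiction to be the crux of the argument; everything else is routine bookkeeping with the elementary bounds $\omega(G) - 1 \le |K| \le \omega(G)$ and $\alpha(G) - 1 \le |S| \le \alpha(G)$.
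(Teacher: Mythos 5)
Your proof is correct and complete. Note that the paper does not actually prove this theorem -- it only cites Hammer and Simeone \cite{HaSi81} and Golumbic \cite{Go80} -- so there is no internal argument to compare against. Your route is the standard one: the elementary bounds $\omega(G)-1 \le |K| \le \omega(G)$ and $\alpha(G)-1 \le |S| \le \alpha(G)$ (coming from the fact that a clique meets $S$ in at most one vertex and a stable set meets $K$ in at most one vertex), the identification of the swing vertex by comparing a maximum clique (resp.\ maximum stable set) with $K$ (resp.\ $S$), and the edge/non-edge contradiction on the pair $\{s,k\}$ that kills the fourth combination $(|K|,|S|) = (\omega(G)-1,\alpha(G)-1)$. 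The one place worth a sentence of care is the degenerate situation $K = \emptyset$ or $S = \emptyset$, but your argument handles it automatically: the hypotheses $|K|=\omega(G)-1$ and $|S|=\alpha(G)-1$ force $\omega(G)\ge 1$ and $\alpha(G)\ge 1$, so the vertices $s$ and $k$ genuinely exist, are distinct, and the contradiction goes through.
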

 
In cases (ii) and (iii) of Theorem~\ref{split-thm}, we call such  vertices $s$ or $k$  \emph{swing} vertices of $G$.  Additional background on split graphs can be found in  \cite{ChCoTr16} and \cite{Go80}. The next remarks follow directly from Theorem~\ref{split-thm} and give alternative conditions for a split graph to be balanced or unbalanced.

\begin{remark} \label{swing-rem}
{\rm
A split graph is unbalanced if and only if it has a swing vertex. }

\end{remark}
 
\begin{remark} {\rm
Since we are considering unlabeled graphs, an unbalanced split graph will have a unique $K$-max partition and a unique $S$-max partition.   A balanced split graph has a unique partition that is both $K$-max and $S$-max.  }
\end{remark}
 
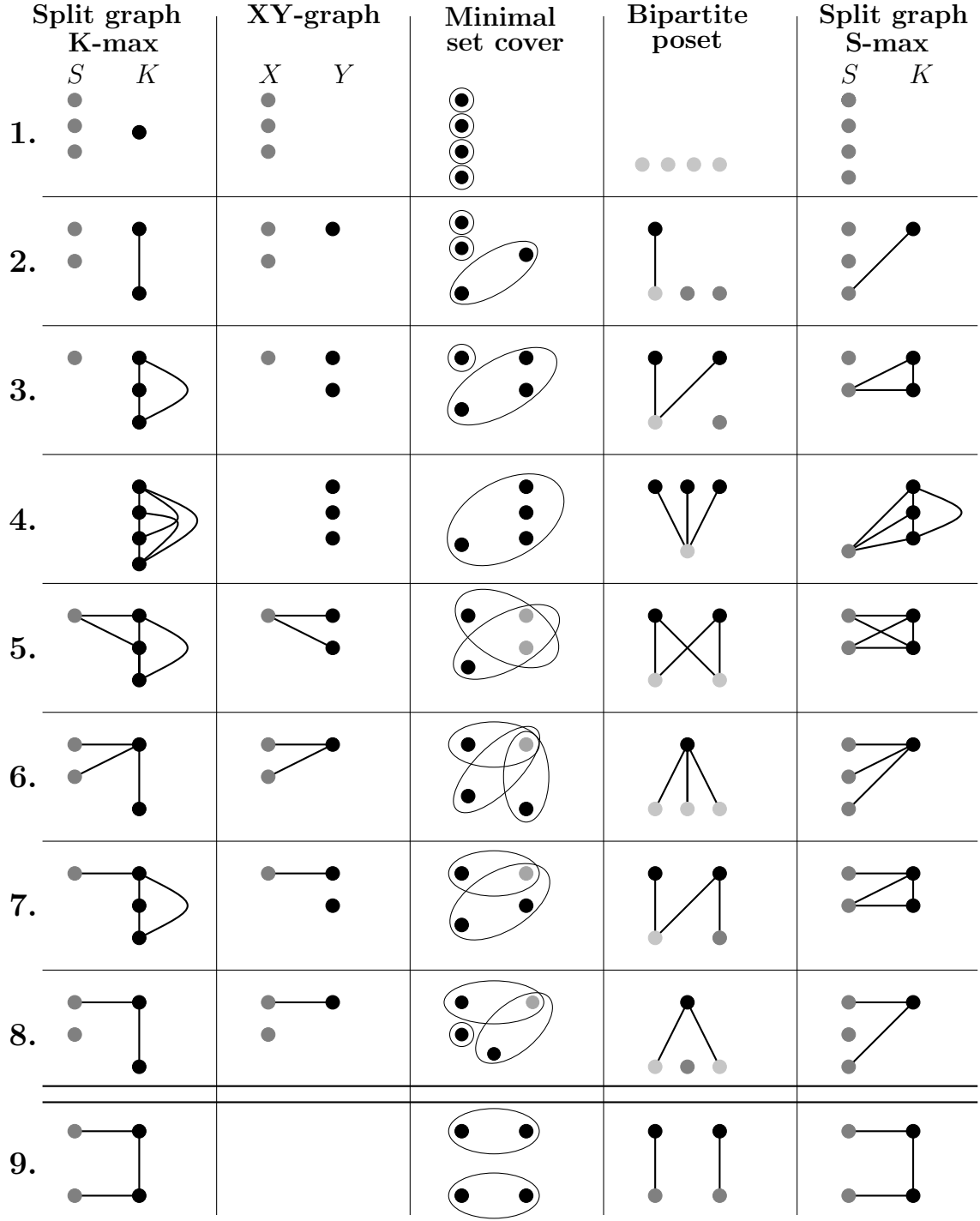
\begin{figure}
 
\begin{center}
\begin{tikzpicture}

\node(C1) at (-.8,.5) {\bf \large 9.};
\node(C1) at (-.8,2.5) {\bf \large 8.};
\node(C1) at (-.8,4.5) {\bf \large 7.};
\node(C1) at (-.8,6.5) {\bf \large 6.};
\node(C1) at (-.8,8.5) {\bf \large 5.};
\node(C1) at (-.8,10.5) {\bf \large 4.};
\node(C1) at (-.8,12.5) {\bf \large 3.};
\node(C1) at (-.8,14.5) {\bf \large 2.};
\node(C1) at (-.8,16.5) {\bf \large 1.};

\draw (2.2,-.3)--(2.2,18.3);
\draw (5.2,-.3)--(5.2,18.3);
\draw (8.2,-.3)--(8.2,18.3);
\draw (11.2,-.3)--(11.2,18.3);


\draw[thick] (-.5,1.7)--(14,1.7);
\draw[thick] (-.5,1.45)--(14,1.45);
\draw (-.5,3.5)--(14,3.5);
\draw (-.5,5.5)--(14,5.5);
\draw (-.5,7.5)--(14,7.5);
\draw (-.5,9.5)--(14,9.5);
\draw (-.5,11.5)--(14,11.5);
\draw (-.5,13.5)--(14,13.5);
\draw (-.5,15.5)--(14,15.5);

\draw[thick] (0,0) -- (1,0);
\draw[thick] (0,1) --(1,1) -- (1,0);
\draw[thick] (0,3) -- (1,3) -- (1,2);
\draw[thick] (0,5) -- (1,5) -- (1,5) --(1,4);
\draw[thick] (1,4) .. controls (2,4.5) and (2,4.5) .. (1,5);
\draw[thick] (0,6.5) -- (1,7) -- (1,6);
\draw[thick] (0,7) -- (1,7);
\draw[thick] (1,9) -- (0,9) -- (1,8.5) -- (1,8) --(1,9);
\draw[thick] (1,8) .. controls (2,8.5) and (2,8.5) .. (1,9);
\draw[thick] (1,9.8)--(1,11);
\draw[thick] (1,9.8) .. controls (2.2,10.5) and (2.2,10.5) .. (1,11);
\draw[thick] (1,9.8) .. controls (1.8,10.5) and (1.8,10.5) .. (1,10.6);
\draw[thick] (1,10.2) .. controls (1.8,10.5) and (1.8,10.5) .. (1,11);
\draw[thick] (1,13) -- (1,12);
\draw[thick] (1,12) .. controls (2,12.5) and (2,12.5) .. (1,13);
\draw[thick] (1,14) -- (1,15);

\filldraw [gray] 
(0,0) circle [radius=3pt]
(0,1) circle [radius=3pt]
(0,2.5) circle [radius=3pt]
(0,3) circle [radius=3pt]
(0,5) circle [radius=3pt]
(0,6.5) circle [radius=3pt]
(0,7) circle [radius=3pt]
(0,9) circle [radius=3pt]
(0,13) circle [radius=3pt]
(0,14.5) circle [radius=3pt]
(0,15) circle [radius=3pt]
(0,16.2) circle [radius=3pt]
(0,16.6) circle [radius=3pt]
(0,17) circle [radius=3pt]
;

\filldraw [black] 
(1,0) circle [radius=3pt]
(1,1) circle [radius=3pt]
(1,2) circle [radius=3pt]
(1,3) circle [radius=3pt]
(1,4) circle [radius=3pt]
(1,4.5) circle [radius=3pt]
(1,5) circle [radius=3pt]
(1,6) circle [radius=3pt]
(1,7) circle [radius=3pt]
(1,8) circle [radius=3pt]
(1,8.5) circle [radius=3pt]
(1,9) circle [radius=3pt]
(1,9.8) circle [radius=3pt]
(1,10.2) circle [radius=3pt]
(1,10.6) circle [radius=3pt]
(1,11) circle [radius=3pt]
(1,12) circle [radius=3pt]
(1,12.5) circle [radius=3pt]
(1,13) circle [radius=3pt]
(1,14) circle [radius=3pt]
(1,15) circle [radius=3pt]

(1,16.5) circle [radius=3pt]
;
\node (A0) at (.6,17.4) {\bf $S$ \ \ \ \ $K$};
\node (A) at (.6,17.9) {\bf $\mathbf K$-max};
\node (AA) at (.5,18.3) {\bf Split graph};

\draw[thick] (3,3) -- (4,3);
\draw[thick] (3,5) -- (4,5);
\draw[thick] (3,6.5) -- (4,7)--(3,7);
\draw[thick] (4,9) -- (3,9) -- (4,8.5);

\filldraw [gray] 
(3,2.5) circle [radius=3pt]
(3,3) circle [radius=3pt]
(3,5) circle [radius=3pt]
(3,6.5) circle [radius=3pt]
(3,7) circle [radius=3pt]
(3,9) circle [radius=3pt]
(3,13) circle [radius=3pt]
(3,14.5) circle [radius=3pt]
(3,15) circle [radius=3pt]
(3,16.2) circle [radius=3pt]
(3,16.6) circle [radius=3pt]
(3,17) circle [radius=3pt]
;

\filldraw [black] 

(4,3) circle [radius=3pt]
(4,4.5) circle [radius=3pt]
(4,5) circle [radius=3pt]
(4,7) circle [radius=3pt]
(4,8.5) circle [radius=3pt]
(4,9) circle [radius=3pt]
(4,10.2) circle [radius=3pt]
(4,10.6) circle [radius=3pt]
(4,11) circle [radius=3pt]
(4,12.5) circle [radius=3pt]
(4,13) circle [radius=3pt]
(4,15) circle [radius=3pt]
;

\node (B0) at (3.6, 17.4) {\bf $X$ \ \ \ \ $Y$ };
\node (B) at (3.5,18.3) {\ \ \ \bf $\mathbf X\mathbf Y$-graph};

\draw (6.5,0) ellipse [x radius=20pt, y radius=10pt];
\draw (6.5,1) ellipse [x radius=20pt, y radius=10pt];

\draw (6.5,3) ellipse [x radius=22pt, y radius=9.5pt];
\draw[rotate=40.95] (6.85,-2.5) ellipse [x radius=20.5pt, y radius=10.5pt];
\draw (6,2.5) circle [radius=5.3pt];

\draw (6.5,5) ellipse [x radius=20pt, y radius=10pt];
\draw[rotate=31.1] (8,.5) ellipse [x radius=24.5pt, y radius=13pt];

\draw (6.5,7) ellipse [x radius=20pt, y radius=10pt];
\draw (7,6.5) ellipse [x radius=10pt, y radius=20pt];
\draw[rotate=45] (9.3,.05) ellipse [x radius=25pt, y radius=10pt];

\draw[rotate=29.4] (10.05,4.2) ellipse [x radius=26pt, y radius=12pt];
\draw[rotate=60.5] (10.96,-1.5) ellipse [x radius=14pt, y radius=25pt];

\draw[rotate=29.7] (11,5.8) ellipse [x radius=28pt, y radius=17pt];

\draw[rotate=30.3] (12.06,7.5) ellipse [x radius=27pt, y radius=12pt];
\draw (6,13) circle [radius=6pt];

\draw (6,15.1) circle [radius=5.3pt];
\draw (6,14.7) circle [radius=5.3pt];
\draw[rotate=32] (13.1,8.7) ellipse [x radius=22pt, y radius=9pt];

\draw (6,17) circle [radius=5.3pt];
\draw (6,16.6) circle [radius=5.3pt];
\draw (6,16.2) circle [radius=5.3pt];
\draw (6,15.8) circle [radius=5.3pt];

\filldraw [black] 
(6,0) circle [radius=3pt]
(6,1) circle [radius=3pt]
(6,2.5 ) circle [radius=2.8pt]
(6,3) circle [radius=2.8pt]
(6,5) circle [radius=3pt]
(6.1,6.2) circle [radius=3pt]
(6.1,7) circle [radius=3pt]
(6.1,9) circle [radius=3pt]
(6,13) circle [radius=3pt]
(6,14) circle [radius=3pt]
(6,14.7) circle [radius=2.8pt]
(6,15.1) circle [radius=2.8pt]
(6,15.8) circle [radius=2.8pt]
(6,16.2) circle [radius=2.8pt]
(6,16.6) circle [radius=2.8pt]
(6,17) circle [radius=2.8pt]
(7,0) circle [radius=3pt]
(7,1) circle [radius=3pt]
(6.5,2.2) circle [radius=2.8pt]
(6,4.2) circle [radius=3pt]
(7,4.5) circle [radius=3pt]
(7,6) circle [radius=3pt]
(6.1,8.2) circle [radius=3pt]
(6,10.1) circle [radius=3pt]
(7,10.2) circle [radius=3pt]
(7,10.6) circle [radius=3pt]
(7,11) circle [radius=3pt]
(6,12.2) circle [radius=3pt]
(7,12.5) circle [radius=3pt]
(7,13) circle [radius=3pt]
(7,14.6) circle [radius=3pt]
;

\filldraw [gray!70!white]
(7.1,3) circle [radius=2.8pt]
(7,5) circle [radius=3pt]
(7,7) circle [radius=3pt]
(7,8.5) circle [radius=3pt]
(7,9) circle [radius=3pt]

;
\node (C0) at (6.6, 18.3) {\bf Minimal};
\node (C) at (6.6,17.9) {\bf \ set cover};

\draw[thick] (9,0)--(9,1);
\draw[thick] (10,0)--(10,1);
\draw[thick] (9,2)--(9.5,3)--(10,2);
\draw[thick] (9,5)--(9,4)--(10,5)--(10,4);
\draw[thick] (9,6)--(9.5,7)--(9.5,6)--(9.5,7)--(10,6);
\draw[thick] (10,8)--(9,9)--(9,8)--(10,9)--(10,8);
\draw[thick] (9,11)--(9.5,10)--(9.5,11)--(9.5,10)--(10,11);
\draw[thick] (9,13)--(9,12)--(10,13);
\draw[thick] (9,14)--(9,15);

\filldraw[gray!45!white]
(9,2) circle [radius=3pt]
(10,2) circle [radius=3pt]
(9,4) circle [radius=3pt]
(10,4) circle [radius=3pt]
(9,6) circle [radius=3pt]
(9.5,6) circle [radius=3pt]
(10,6) circle [radius=3pt]
(9,8) circle [radius=3pt]
(10,8) circle [radius=3pt]
(9.5,10) circle [radius=3pt]
(9,12) circle [radius=3pt]
(9,14) circle [radius=3pt]

(8.8,16) circle [radius=3pt]
(9.2,16) circle [radius=3pt]
(9.6,16) circle [radius=3pt]
(10,16) circle [radius=3pt]

;
\filldraw[black!50!white]
(9,0) circle [radius=3pt]
(10,0) circle [radius=3pt]
(9.5,2) circle [radius=3pt]
(10,4) circle [radius=3pt]
(10,12) circle [radius=3pt]
(9.5,14) circle [radius=3pt]
(10,14) circle [radius=3pt]

;

\filldraw [black] 
(9,1) circle [radius=3pt]
(10,1) circle [radius=3pt]
(9.5,3) circle [radius=3pt]

(9,5) circle [radius=3pt]
(10,5) circle [radius=3pt]

(9.5,7) circle [radius=3pt]

(9,9) circle [radius=3pt]
(10,9) circle [radius=3pt]

(9,11) circle [radius=3pt]
(9.5,11) circle [radius=3pt]
(10,11) circle [radius=3pt]
(9,13) circle [radius=3pt]
(10,13) circle [radius=3pt]

(9,15) circle [radius=3pt]
;

\node (D) at (9.5,18.3) {\bf  Bipartite};
\node (D1) at (9.5,17.9) {\bf poset};

\draw[thick] (12,0) -- (13,0);
\draw[thick] (12,1) --(13,1) -- (13,0);
\draw[thick] (12,3)--(13,3)--(12,2);
\draw[thick] (12,5)--(13,5)--(12,4.5)--(13,4.5)--(13,5);
\draw[thick] (12,7)--(13,7)--(12,6);
\draw[thick] (12,6.5)--(13,7);
\draw[thick] (13,8.5)--(12,8.5)--(13,9)--(13,8.5)--(12,9)--(13,9);
\draw[thick] (12,10)--(13,11)--(13,10.6)--(12,10)--(13,10.2)--(13,10.6);
\draw[thick] (13,11) .. controls (14,10.6) and (14,10.6) .. (13,10.2);
\draw[thick] (12,12.5)--(13,12.5)--(13,13)--(12,12.5);
\draw[thick] (12,14)--(13,15);

\filldraw [gray] 
(12,0) circle [radius=3pt]
(12,1) circle [radius=3pt]
(12,2) circle [radius=3pt]
(12,2.5) circle [radius=3pt]
(12,3) circle [radius=3pt]
(12,4.5) circle [radius=3pt]
(12,5) circle [radius=3pt]
(12,6) circle [radius=3pt]
(12,6.5) circle [radius=3pt]
(12,7) circle [radius=3pt]
(12,8.5) circle [radius=3pt]
(12,9) circle [radius=3pt]
(12,10) circle [radius=3pt]
(12,12.5) circle [radius=3pt]
(12,13) circle [radius=3pt]
(12,14) circle [radius=3pt]
(12,14.5) circle [radius=3pt]
(12,15) circle [radius=3pt]
(12,15.8) circle [radius=3pt]
(12,16.2) circle [radius=3pt]
(12,16.6) circle [radius=3pt]
(12,17) circle [radius=3pt]
(12,17) circle [radius=3pt]
;

\filldraw [black] 
(13,0) circle [radius=3pt]
(13,1) circle [radius=3pt]
(13,3) circle [radius=3pt]
(13,4.5) circle [radius=3pt]
(13,5) circle [radius=3pt]
(13,7) circle [radius=3pt]
(13,8.5) circle [radius=3pt]
(13,9) circle [radius=3pt]
(13,10.2) circle [radius=3pt]
(13,10.6) circle [radius=3pt]
(13,11) circle [radius=3pt]
(13,12.5) circle [radius=3pt]
(13,13) circle [radius=3pt]
(13,15) circle [radius=3pt]

;

\node (E0) at (12.6,17.4) {\bf $S$ \ \ \ \ $K$};
\node (E) at (12.6,17.9) {\bf $\mathbf S$-max};
\node (EE) at (12.7,18.3) {\bf Split graph};

\end{tikzpicture}

 \end{center}
 
\caption{\small All split graphs, minimal set covers, and bipartite posets on four vertices.   All $XY$-graphs on 3 vertices.   Entries in rows 1--8 are unbalanced, entries in row 9 are balanced.}
\label{big-fig}
\end{figure}

\subsection{Overview}
In this paper we study bijections and counting questions for split graphs and related classes.  
  There is a complicated formula for the number of  split  graphs on $n$ vertices resulting from the  work of Clarke \cite{Cl90} and Royle \cite{Ro00}. In \cite{ChCoTr16} we count the number of balanced and unbalanced split graphs on $n$ vertices. That proof uses a sequence of bijections involving families of graphs called NG-graphs.   The following  surprising theorem about split graphs is also proven in \cite{ChCoTr16}. In Section~\ref{comp-for-split-g} of this paper, we give a direct and natural proof of Theorem~\ref{compil-split-thm}. 
   
\begin{Thm} (Compilation Theorem for Split Graphs)
There is a  bijection between the class of  unbalanced split graphs on $n$ vertices and the class of  split graphs on $t$ vertices where $0 \le t \le n-1$.
\label{compil-split-thm}
\end{Thm}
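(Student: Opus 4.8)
The plan is to prove Theorem~\ref{compil-split-thm} by factoring the desired bijection through an auxiliary family: \emph{bipartite graphs equipped with an ordered bipartition}, that is, triples $(Y,X,E)$ with $E\subseteq Y\times X$ (either side allowed to be empty), two such triples being identified when some edge-preserving bijection sends $Y$ to $Y$ and $X$ to $X$. These are the objects in the second column of Figure~\ref{big-fig}. I will construct a bijection $\Phi_1$ from the unbalanced split graphs on $n$ vertices to the bipartite graphs with an ordered bipartition on $n-1$ vertices, and a bijection $\Phi_2$ from the bipartite graphs with an ordered bipartition on $m$ vertices to the disjoint union, over $0\le t\le m$, of the classes of split graphs on $t$ vertices; taking $m=n-1$ and composing $\Phi_2\circ\Phi_1$ then proves the theorem. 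The only structural input I need is a reformulation of Theorem~\ref{split-thm}: a $KS$-partition $(K,S)$ of a split graph is $K$-max if and only if no vertex of $S$ is adjacent to all of $K$ (if some $s\in S$ were, then $K\cup\{s\}$ would be a larger clique; conversely a clique meets the stable set $S$ in at most one vertex, and if that vertex misses some vertex of $K$ the clique has size at most $|K|$). In particular every split graph has a canonical $K$-max partition, unique up to isomorphism as noted after Theorem~\ref{split-thm}.

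For $\Phi_1$, take the $K$-max partition $(K,S)$ of an unbalanced split graph $G$; this partition is not of type~(i) in Theorem~\ref{split-thm}, so it is of type~(iii), and there is a swing vertex $k\in K$ with $N(k)\cap S=\emptyset$. Let $\Phi_1(G)$ be the bipartite graph on $Y=K\setminus\{k\}$ and $X=S$ carrying exactly the $G$-edges between these two sets. Since no swing vertex of the $K$-max partition is adjacent to any vertex of $S$, interchanging two such vertices is an isomorphism, so $\Phi_1(G)$ is independent of the choice of $k$. Conversely, given $(Y,X,E)$ on $n-1$ vertices, form the split graph $G'$ with clique $Y\cup\{k^\ast\}$, stable set $X$, cross-edges $E$, and $k^\ast$ joined to all of $Y$ but to nothing in $X$; the reformulation above shows that $(Y\cup\{k^\ast\},X)$ is the $K$-max partition of $G'$, that $k^\ast$ is a swing vertex so that $G'$ is unbalanced, and that this construction inverts $\Phi_1$.

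For $\Phi_2$, in a bipartite graph $(Y,X,E)$ on $m$ vertices let $X^{\mathrm{full}}$ be the set of $x\in X$ adjacent to all of $Y$, and send it to the split graph $\Phi_2(Y,X,E)$ with clique $Y$, stable set $X\setminus X^{\mathrm{full}}$, and the inherited cross-edges. This has $t=m-|X^{\mathrm{full}}|\le m$ vertices, and by the reformulation $(Y,\,X\setminus X^{\mathrm{full}})$ is its $K$-max partition. Going back, a split graph $H$ on $t$ vertices with $K$-max partition $(K_H,S_H)$ is sent to the bipartite graph obtained from $H$ by adjoining $m-t$ new stable vertices, each adjacent to all of $K_H$; since no vertex of $S_H$ is adjacent to all of $K_H$, the full-degree stable vertices of the result are precisely the $m-t$ new ones, so deleting them recovers $H$, and the two operations are mutually inverse.

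The step I expect to require the most care is checking that all four maps respect isomorphism rather than merely labeled structure: that deleting any swing vertex in $\Phi_1$ yields the same bipartite graph, that the $K$-max partition invoked in $\Phi_2^{-1}$ is genuinely canonical, and, crucially, that the vertices freshly adjoined by $\Phi_1^{-1}$ and $\Phi_2^{-1}$ cannot be confused with pre-existing vertices of the same local type. This last point is exactly where the two defining conditions---no stable vertex of full clique-degree, and at least one clique vertex of zero stable-degree---do the work, and it is what forces the codomain of $\Phi_2$ to be the disjoint union over all $t\le m$ rather than a single value of $t$. Once these checks are in place, unwinding the composite shows that $\Phi_2\circ\Phi_1$ simply deletes from $G$ one swing vertex $k$ of its $K$-max partition together with every stable vertex whose unique clique non-neighbor is $k$, producing a split graph on $n-1$ or fewer vertices, exactly as Theorem~\ref{compil-split-thm} asserts.
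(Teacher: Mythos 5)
Your argument is correct, but it is not the route the paper takes. The paper's proof of Theorem~\ref{compil-split-thm} is a single direct construction on the $S$-max side: delete a swing vertex $s\in S$ (one adjacent to all of $K$) together with the clique vertices whose only neighbor in $S$ is $s$; in reverse, restore $s$ adjacent to all of $K$ and pad the clique with new vertices adjacent to $s$ alone. Your composite $\Phi_2\circ\Phi_1$ is exactly the complement-dual of that map: you work with the $K$-max partition and delete a swing vertex $k\in K$ (one with no neighbors in $S$) together with the stable vertices whose unique non-neighbor in $K$ is $k$. Beyond the change of side, you factor the bijection through $XY$-graphs: your $\Phi_1$ is precisely the bijection of Theorem~\ref{OEIS-thm} read backwards, while your $\Phi_2$ --- arbitrary $XY$-graphs on $m$ vertices correspond bijectively to split graphs on at most $m$ vertices, the discrepancy $m-t$ being recorded by the canonical set $X^{\mathrm{full}}$ of stable vertices adjacent to all of $Y$ --- is a clean statement the paper never isolates; Theorems~\ref{split-analog-thm} and~\ref{xy-compilation-thm} play an analogous role but use the $S$-max convention and restrict to $XY$-graphs with no isolates in $Y$. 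The paper's version buys brevity and self-containment; yours buys modularity (the theorem becomes an immediate corollary of two independently meaningful bijections) and a transparent reason why $t$ ranges over all of $0\le t\le n-1$. The checks you flag do all go through: the interchangeability of swing vertices in $K$, the uniqueness of the $K$-max partition of an unlabeled split graph (the remark following Theorem~\ref{split-thm}), and the fact that $K$-maxness forbids a pre-existing stable vertex from masquerading as one of the freshly adjoined full-degree vertices are exactly the points on which well-definedness and invertibility rest, and your reformulation of $K$-maxness supplies each of them.
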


Royle's work provides a bijection between split graphs and minimal set covers. In searching the \emph{Online Encyclopedia of Integer Sequences\/} (OEIS), we found on the split graph page (A048194) two additional  classes of combinatorial objects with similar sizes. The first class is bipartite graphs with a distinguished block ($XY$-graphs). 
The number of $XY$-graphs on $n$ vertices is equal to the number of principal (or fundamental)  transversal matroids of size $n$, as enumerated by Brylawski \cite{Br75}.  In 2000, Vladeta Jovivic noted on the OEIS page that the number of unlabeled bipartite graphs with $n$ vertices and no isolated vertices in the distinguished bipartite block equals the number of unlabeled split graphs on $n$ vertices. The second class is  unlabeled posets of height at most one. Detlef Pauly noted on the OEIS page that the number of such posets with $n$ elements equals  number of unlabeled split graphs on $n$ vertices. 

\medskip

\addtocounter{table}{1}

\begin{table}[h]
\begin{center}
\begin{tabular}{|c|c|}\hline
 {\bf Class} &  {\bf Unbalanced subclass characterized   } \\ 
 &  {\bf  by the existence of  a  \ldots } \\ 
  \hline
  & \\
 Split Graphs &   swing vertex \\ \hline
   & \\
 Minimal Set Cover $\cal C$ on set $V$ &  set of size $|V| - |{\cal C}| + 1$ \\ \hline
   & \\
 $XY$-graph with no isolate in $Y$ &   universal vertex in $X$ \\ \hline
   & \\
 Bipartite Poset &   full support point\\  \hline
\end{tabular}
\end{center}
\caption{Unbalanced subclasses characterized by the existence of a structure.}
\label{unbal-table}
\end{table}

In this paper we consider these four combinatorial settings. We define balanced and unbalanced subclasses for each of the other three classes. As detailed in Table~\ref{unbal-table},
for each class, the unbalanced category is characterized by the existence of a structure.  
We provide bijections between each pair of classes, and show our bijections preserve balance.  These bijections prove that a Compilation Theorem holds for each of the other three classes. We also prove each of these directly, and each new context contributes something new: either a shorter and more intuitive proof, and/or an intriguing  new theorem. It is interesting to look at these proofs in their own settings without reverting to split graphs. Some of these proofs are subtle and would be more difficult to discover without their connection to one of the other classes. 

Other authors have studied counting questions for related classes of graphs and posets.   Hanlon \cite{Ha79} counted unlabeled bipartite graphs using generating functions.   In 2014, Gainer-Dewar and Gessel   counted  both unlabeled bipartite graphs and blocks in a bipartite graph \cite{GaGe14}.  Also in 2014, Guay-Paquet et al. use  similar techniques to count  both labeled and unlabeled $(3+1)$-free posets \cite{GuMoRo14}. 

The rest of the paper is organized as follows:  we focus on minimal set covers in Section~\ref{set-sec},  $XY$-graphs in Section~\ref{bip-sec}, and bipartite posets in Section~\ref{poset-sec}. In Section~5 we show the remaining bijections between different classes. Finally, in Section~\ref{comp-for-split-g}, we give a short proof of Theorem~\ref{compil-split-thm} and  in Section~\ref{concl} we present concluding remarks.

\section{Minimal set covers}

\label{set-sec}

In 1990, Clarke \cite{Cl90} found counting formulas for labeled and unlabeled set covers and in 2000, Royle \cite{Ro00} recognized that the number of unlabeled minimal set covers equalled the  number of unlabeled split graphs for small values of $n$. He demonstrated a bijection between unlabeled split graphs and unlabeled minimal set covers, thus confirming that Clarke's formula counts split graphs. In this section, we translate the concepts of balanced and unbalanced split graphs to the setting of minimal set covers.   A minimal set cover is unlabeled when the elements in the ground set of a set cover are unlabeled and the sets in the cover are  also unlabeled.  Column 3 of Figure~\ref{big-fig} shows the nine minimal set covers of a set with four elements;  the set cover in row~1  contains four sets while that in row 2 contains 3 sets.

\begin{Def}{\rm
Given a set of unlabeled vertices $V$, a \emph{set cover}  ${\cal C}$ is a collection of subsets of $V$ whose union is $V$.   A set cover $\cal C$ is  \emph{minimal} if no set in $\cal C$ is contained in the union of the remaining sets in $\cal C$.  A  vertex in $V$ is \emph{loyal} if it is in a unique set of $\cal C$.  

}
\label{msc-def}
\end{Def}

In column three of Figure~\ref{big-fig}, the loyal vertices are colored black.
The next remark follows directly from Definition~\ref{msc-def}.

\begin{remark}{\rm
A set cover $\cal C$ of $V$ is minimal if and only if each  set in $\cal C$ contains a loyal vertex.  
}
\label{loyal-rem}
\end{remark}

While the definition of loyal vertices in set covers comes from \cite{Ro00},  we
next define an analog  in split graphs.
 
\begin{Def}{\rm
A vertex in a split graph $G$ is \emph{loyal} if it is contained in a unique maximal clique of $G$.
}
\end{Def}

 \begin{remark} {\rm For any $KS$-partition of a split graph $G$, each vertex in $S$ is loyal. 
 For any  $S$-max $KS$-partition of a split graph $G$, the only loyal vertices in $G$ are those in $S$ unless there is a unique swing  vertex $s\in S$.  If such an $s$ exists,  the additional loyal vertices, if any, are those vertices of $K$ whose only adjacency in $S$ is $s$. 
 }
 \label{loyal-S-rem}
 \end{remark}
 
 In the last column of Figure~\ref{big-fig}, the graphs with loyal vertices in $K$ are those in rows 2, 3, 4, and 7.
 The next theorem appears in  \cite{Ro00}.  We include a proof here because we refer to the bijection in the proof  of Theorem~\ref{Royle-thm}
 when proving Theorem~\ref{set-preserve-bal}.    The construction in the proof of Theorem~\ref{Royle-thm}
  is illustrated in Figure~\ref{big-fig} by comparing the entries in columns  3 and 5.

 \begin{Thm}[Royle \cite{Ro00}]
There is a bijection between split graphs on $n$ vertices   and minimal set covers on  $n$ vertices.
\label{Royle-thm}
 \end{Thm}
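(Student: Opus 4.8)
The plan is to construct the bijection explicitly in both directions and verify they are mutual inverses. Given a split graph $G$ on $n$ vertices, fix its $S$-max $KS$-partition $K \cup S$ (unique since $G$ is unlabeled, by the remark following Theorem~\ref{split-thm}). To build a minimal set cover: take the ground set $V$ to be the vertex set $S \cup K$, and for each vertex $k \in K$, form the set $C_k = \{k\} \cup (N(k) \cap S)$, i.e.\ $k$ together with its neighbors in the stable set. I would let $\mathcal{C} = \{C_k : k \in K\}$. Each such set contains the loyal vertex $k \in K$ (loyal because $k$ lies in no other $C_{k'}$), so by Remark~\ref{loyal-rem} the cover is minimal; and every vertex of $S$ is covered because $K$ dominates $S$ (otherwise an undominated vertex of $S$ could be moved into $K$, contradicting $S$-maximality — here I would invoke Theorem~\ref{split-thm}(ii) to pin down exactly when such a swing vertex exists). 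The vertices of $K$ are covered trivially. So $\mathcal{C}$ is a minimal set cover on $|V| = n$ elements.

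For the reverse map, start with a minimal set cover $\mathcal{C}$ on ground set $V$, $|V| = n$. Build a graph $G$ whose vertex set is $V \cup \mathcal{C}$ — wait, that would have too many vertices; instead I would follow Royle's idea of identifying each set with one of its loyal vertices. More carefully: by Remark~\ref{loyal-rem}, each set $C \in \mathcal{C}$ has at least one loyal vertex; the set of loyal vertices partitions into blocks, one per set (a loyal vertex belongs to exactly one set). Take $K$ to be a set of $|\mathcal{C}|$ representatives, one loyal vertex per set, and let $S = V \setminus K$. Make $G[K]$ a clique, $G[S]$ a stable set, and join $k_C \in K$ to $s \in S$ iff $s \in C$. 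This gives a split graph on $|V| = n$ vertices. The delicate point is well-definedness: different choices of representative loyal vertex could a priori give non-isomorphic graphs, so I need to argue the resulting unlabeled split graph does not depend on the choice — this follows because swapping two loyal vertices of the same set $C$ is realized by a graph automorphism (they have the same closed neighborhood structure after the construction), and because a non-loyal vertex of $V$ cannot be promoted to $K$ without violating minimality in a way that changes nothing up to isomorphism. I would spell this out using Remark~\ref{loyal-S-rem}.

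The core of the argument is then checking the two constructions are inverse to each other up to isomorphism. In one direction: starting from $G$ with $S$-max partition $K \cup S$, forming $\mathcal{C} = \{C_k\}$, and applying the reverse map, the loyal vertices of $\mathcal{C}$ include all of $K$ (each $k$ is the unique loyal vertex forced into $C_k$ unless extra loyal vertices appear in $S$ via the swing mechanism) — I recover $K$ as the set of representatives and $S$ as the complement, with the adjacency $k \sim s \iff s \in C_k \iff s \in N(k)$, which is exactly the original $G$. In the other direction: starting from $\mathcal{C}$, building $G$ via representatives, then forming $\{C_k : k \in K\}$ returns sets $\{k\} \cup (N(k)\cap S)$; since $k$ was chosen loyal to its set $C$ and its $S$-neighbors are exactly $C \setminus \{k\}$ restricted to $S$... this needs the observation that $C \cap K = \{k\}$ for the chosen representatives, which holds precisely because each $C$ has its representative loyal vertex and no other representative (a representative of another set $C'$ is loyal to $C'$, hence not in $C$). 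So the recovered cover is $\{C : C \in \mathcal{C}\}$ up to relabeling the ground set, i.e.\ the same unlabeled minimal set cover.

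The main obstacle I expect is not any single computation but the bookkeeping around swing vertices and loyal vertices in $K$: when the $S$-max partition has a (necessarily unique) swing vertex $s \in S$, certain vertices of $K$ become loyal in $G$, and I must make sure the set-cover side and the split-graph side agree on which vertices count as "representatives" versus "stable-set elements." Remark~\ref{loyal-S-rem} is tailored to exactly this situation, so I would lean on it heavily, and I would use Figure~\ref{big-fig} (columns 3 and 5) to sanity-check the correspondence on the nine four-vertex examples — in particular rows 2, 3, 4, 7, where loyal vertices sit in $K$. Everything else is routine verification that the maps are well-defined on unlabeled objects and mutually inverse, and that sizes match ($n$ vertices $\leftrightarrow$ $n$-element ground set).
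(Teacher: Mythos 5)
Your construction has the roles of $K$ and $S$ reversed, and this is fatal rather than cosmetic. In an $S$-max $KS$-partition the asymmetry runs the other way from what you claim: every vertex of $K$ has a neighbor in $S$ (otherwise $S\cup\{k\}$ would be a larger stable set), but a vertex of $S$ need not have any neighbor in $K$ --- an isolated vertex of $G$ sits in $S$ and contradicts nothing. Your justification (``an undominated vertex of $S$ could be moved into $K$, contradicting $S$-maximality'') is backwards: moving a vertex out of $S$ only makes $S$ smaller, which never contradicts $|S|=\alpha(G)$, and such a vertex typically cannot be moved into $K$ at all since it need not be adjacent to all of $K$. Concretely, for the empty graph on $n\ge 1$ vertices the $S$-max partition has $K=\emptyset$, so your forward map produces $\mathcal{C}=\emptyset$, which covers nothing. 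The reverse map fails too: on $V=\{a,b\}$ both minimal set covers $\{\{a,b\}\}$ and $\{\{a\},\{b\}\}$ are sent by your recipe (representatives into $K$, $G[K]$ a clique) to $K_2$, so the map is not injective and nothing hits $\overline{K_2}$.

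The correct correspondence, which is the one the paper uses, indexes the sets of the cover by the \emph{stable-set} vertices: each $s\in S$ yields the set $\{s\}\cup N(s)$, giving $|\mathcal{C}|=|S|$, with the cover property supplied by exactly the domination that $S$-maximality does guarantee (each $k\in K$ has a neighbor in $S$); in the other direction the chosen loyal representatives form $S$, and $K=V-S$ is made a clique. Your secondary concerns --- well-definedness under the choice of loyal representative and the bookkeeping of Remark~\ref{loyal-S-rem} around swing vertices --- are legitimate points that the paper treats lightly, but they are being deployed in service of a map that is not a bijection. Swapping $K$ and $S$ throughout your argument, and replacing your domination claim with the correct one, would essentially recover the paper's proof.
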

 
 \begin{proof}
 
 Let $\cal C$ be a minimal set cover of an  $n$ element set $V$.  Form a graph $G$ on the same vertex set as follows.  Choose one (representative) loyal vertex from each set of $\cal C$,  let $S$ be the collection of these loyal vertices, and let $K = V-S$.    Form the edge set of $G$ as follows:  $uv \in E(G)$ if and only if either $u$ and $v$ are together in a set of $\cal C$ or if $u$,$v$ are both in $K$.  The result is a split graph and $K \cup S$ forms a $KS$-partition.    Indeed, by construction, $|S| = |{\cal C}|$.  Furthermore, for each $k \in K$  there exists $s \in X$ for which $k$ and $s$ are in the same set of   $\cal C$ and thus $ks\in E(G)$.   Consequently $K \cup S$ is an $S$-max $KS$-partition of $G$.
 
 The process is reversible.  Let $G = (V,E)$ be a split graph with an $S$-max   $KS$-partition.  For each $s \in S$, form a set consisting of $s$ and its neighbors.  The resulting collection of sets is a set cover $\cal C$ of $V$ since each $k\in K$ is adjacent to some $s \in S$.  The vertices of $S$ are loyal in $\cal C$ by Remark~\ref{loyal-S-rem},  hence by Remark~\ref{loyal-rem},
  $\cal C$ is a minimal set cover.  
\end{proof}

 We next define  analogs of balanced and unbalanced for minimal set covers so that the bijection given in  Theorem~\ref{Royle-thm}  preserves balance.   The unbalanced class is defined by the existence of a set of a particular size.   It is easy to check that the split graphs and minimal set covers in rows 1--8 of Figure~\ref{big-fig} are unbalanced while the ones in row 9 are balanced.

\begin{Def} {\rm
Let $\cal C$ be a minimal set cover of $V$.  Then $\cal C$ is \emph{unbalanced} if it contains a set with cardinality $|V| - |{\cal C}| + 1$ and \emph{balanced} otherwise.  
}
\end{Def}

\begin{Thm}
The bijection given in Theorem~\ref{Royle-thm} maps unbalanced minimal set covers to unbalanced split graphs and balanced minimal set covers to balanced split graphs.
\label{set-preserve-bal}
\end{Thm}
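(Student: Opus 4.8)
The plan is to track, through the bijection of Theorem~\ref{Royle-thm}, the correspondence between a set in $\cal C$ of cardinality $|V|-|{\cal C}|+1$ and a swing vertex in the associated split graph $G$, and then invoke Remark~\ref{swing-rem}. First I would set up notation: let $\cal C$ be a minimal set cover of $V$ with $n = |V|$, and let $G$ be the split graph with $S$-max $KS$-partition $K \cup S$ produced by the construction in the proof of Theorem~\ref{Royle-thm}. Recall from that proof that $|S| = |{\cal C}|$, that $S$ consists of one representative loyal vertex chosen from each set of $\cal C$, and that for $s \in S$ the set of $\cal C$ it represents is exactly $N_G(s) \cup \{s\}$ restricted appropriately — more precisely, each set $C \in {\cal C}$ corresponds to the vertex $s_C \in S$ chosen from it, and $C = \{s_C\} \cup (C \cap K) \cup (\text{other loyal vertices of } C)$, with the edges of $G$ within $C$ forming a clique on $C \cap K$ together with all of $s_C$'s neighbors.

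The key computation is the translation of the cardinality condition. Since $|K| = n - |S| = n - |{\cal C}|$, a set $C \in {\cal C}$ has cardinality $|V| - |{\cal C}| + 1 = |K| + 1$ if and only if $C$ contains all of $K$ together with exactly one vertex of $S$, namely its representative $s_C$ (any set contains at least its own representative from $S$, and containing a second vertex of $S$ would force $|C| \ge |K|+2$ only if it also met all of $K$, so I should argue carefully that a set of size exactly $|K|+1$ meeting a vertex of $S$ must consist of $s_C$ plus all of $K$; here I use that $K \subseteq \bigcup{\cal C}$ so $K$ is covered, but a single set need not contain all of $K$ in general — the point is that if $|C| = |K|+1$ and $C$ contains at least the one $S$-vertex $s_C$, then $C \setminus \{s_C\} \subseteq K$ has size $|K|$, hence equals $K$). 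So $\cal C$ is unbalanced iff some $C \in {\cal C}$ satisfies $C = K \cup \{s_C\}$. On the split-graph side, $C = K \cup \{s_C\}$ translates via the edge rule to: $s_C$ is adjacent in $G$ to every vertex of $K$, i.e. $K \cup \{s_C\}$ induces a clique, i.e. $s_C$ is a swing vertex of the $S$-max partition (using Theorem~\ref{split-thm}(ii)). Conversely, if $G$ has a swing vertex $s \in S$ with $K \cup \{s\}$ complete, then the set of $\cal C$ represented by $s$ contains $\{s\} \cup K$ and hence has size $\ge |K|+1$; I then need minimality or the $S$-max representative property to conclude it has size exactly $|K|+1$ — in fact the set represented by $s$ is $\{s\} \cup N_G(s)$, and $N_G(s) \subseteq K$ since $S$ is stable and $s$ is its own only $S$-neighbor... wait, $s$ could still be adjacent to other vertices only within $K$, so $N_G(s) = K$ exactly, giving size $|K|+1$. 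Thus unbalanced $\cal C$ corresponds exactly to unbalanced $G$, and by complementation (balanced is the negation) the bijection restricts to a bijection on the balanced subclasses as well.

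The main obstacle I anticipate is the bookkeeping around \emph{which} vertex of $S$ a given large set corresponds to, and ruling out degenerate cases — e.g. a set of size $|K|+1$ that contains two vertices of $S$ (impossible, as shown above, since then its intersection with $K$ would have size $|K|-1 < |K|$, which is fine, so actually I do need to exclude this: a set containing two $S$-vertices $s, s'$ has $|C \cap K| = |K| - 1$; is that a problem? Such a $C$ has size $|K|+1$ but is not of the form $K \cup \{s_C\}$). This is the subtle point: I must show that a set of cardinality exactly $|V|-|{\cal C}|+1$ containing two or more vertices of $S$ cannot be minimal, or else show it still forces a swing vertex. The cleanest route is probably to observe that in the $S$-max construction, a set containing $s$ and $s'$ from $S$ with $s \ne s'$ would mean $ss' \in E(G)$, contradicting that $S$ is stable — so every set of $\cal C$ contains exactly one vertex of $S$ (its representative), which immediately makes the cardinality-$(|K|+1)$ sets precisely those equal to $K \cup \{s_C\}$, and dissolves the difficulty. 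I would state this "exactly one $S$-vertex per set" observation as the first lemma-step and everything else follows by the edge-rule translation and Remark~\ref{swing-rem}.
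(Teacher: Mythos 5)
Your proposal is correct and follows essentially the same route as the paper: both arguments identify the set of cardinality $|V|-|{\cal C}|+1$ with $K$ together with its representative loyal vertex, which is then a swing vertex, and both converse directions observe that a swing vertex $s\in S$ of an $S$-max partition gives rise to the set $K\cup\{s\}$ of exactly that cardinality. The only cosmetic difference is that you establish $C=K\cup\{s_C\}$ by counting $S$-vertices per set (each set contains exactly one, by loyalty of the representatives), whereas the paper gets the same fact by analyzing the structure of the remaining sets ${\cal C}'$; your "exactly one $S$-vertex per set" observation is the right way to dissolve the degenerate cases you worried about.
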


\begin{proof}
Let $\cal C$ be an unbalanced minimal set cover of $V$.  Choose a set $Y \in {\cal C}$ with $|Y| =  |V| - |{\cal C}| + 1$.  Let $V'$ be the vertices in $V $ that are not in $Y$  and let ${\cal C}'$ be the collection of sets in $\cal C$ other than $Y$.    Hence,
   $|V'| = |V| - (|V| - |{\cal C}| + 1) = |{\cal C}| - 1 = |{\cal C}'|$.   By Remark~\ref{loyal-rem}, each set in ${\cal C}'$ has a loyal vertex and by Definition~\ref{msc-def}, these loyal vertices cannot be in $Y$.  Thus they all come from $V'$ and therefore 
   each vertex in $V'$ belongs to a unique set in ${\cal C}'$ and each set in ${\cal C}'$ consists of  one vertex from $V'$ and a (perhaps empty) subset of $Y$.  

Let $u$ be a loyal vertex chosen  from $Y$, and use the bijection from Theorem~\ref{Royle-thm} to get a split graph $G$.   The set of loyal vertices chosen from the sets in ${\cal C}'$ will be $V'$.   In the resulting $KS$-partition of $G$, the vertex set of   clique $K$  is $Y - \{u\}$.      Adding vertex $u$ to $K$ gives a larger clique since the bijection transforms the set $Y$ into a clique.  Thus the split  graph $G$ is unbalanced as desired. 

Conversely, let $G$ be an unbalanced split graph  with vertex set $V$.  Fix an $S$-max $KS$-partition of $G$.  The bijection in Theorem~\ref{Royle-thm} produces a set cover $\cal C$ of $V$ where each set in $\cal C$ corresponds to an element of $S$ together with its neighborhood.  Thus $|{\cal C}| = |S|$.  Since $G$ is unbalanced and our partition is $S$-max, there exists a vertex in $S$ adjacent to every vertex in $K$.   Let $u$ be such a vertex, thus $K \cup \{u\}$ is a maximal clique in $G$  and its vertices form a set $Y$  in $\cal C$.    Thus in the resulting set cover $\cal C$, we have    $|V| - |{\cal C}| + 1 = (|K| + |S|) - |S| + 1 = |K| + 1$.  The set $Y$ in $\cal C$ has cardinality $|Y| = |K \cup \{u\}| = |K| + 1$, so $\cal C$ is unbalanced as desired.
\end{proof}

The next theorem is the Compilation Theorem for Minimal Set Covers.  
Our proof focuses on a set $Y$ whose existence is the defining property of unbalanced minimal set covers.

\begin{Thm}
There is a bijection between unbalanced minimal set covers of an $n$-set and minimal set covers of a set with $t$   elements for $0 \le t \le n-1$.
\label{compil-set-thm} 
\end{Thm}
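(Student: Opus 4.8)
The plan is to construct the bijection directly by ``peeling off'' the witnessing set $Y$, imitating the structure revealed in the proof of Theorem~\ref{set-preserve-bal}. Given an unbalanced minimal set cover $\cal C$ of an $n$-set $V$, first choose a set $Y \in \cal C$ with $|Y| = |V| - |{\cal C}| + 1$; as in the proof of Theorem~\ref{set-preserve-bal}, the remaining sets ${\cal C}' = {\cal C} \setminus \{Y\}$ have ground set $V' = V \setminus Y$ of size exactly $|{\cal C}'|$, each vertex of $V'$ lies in a unique set of ${\cal C}'$, and each set of ${\cal C}'$ consists of one ``private'' vertex of $V'$ together with a (possibly empty) subset of $Y$. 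I would record the data of $\cal C$ instead as: the set $Y$ (now to be viewed as the ground set of a new cover), together with, for each set of ${\cal C}'$, the subset of $Y$ it contributes. Discarding the empty contributions, the nonempty traces $\{A \cap Y : A \in {\cal C}'\}$ form a collection $\cal D$ of nonempty subsets of $Y$; their union is all of $Y$ precisely because $\cal C$ (hence also the loyal vertices forcing each set of $\cal C$ to be needed) covers $Y$ using the sets of ${\cal C}'$ — note $Y$ itself cannot be the only set covering any $y \in Y$ since then $y$ would be loyal and would have to be chosen as $Y$'s representative, which is fine, but every other vertex of $Y$ must be covered by ${\cal C}'$. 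One must be slightly careful here and I will return to it. The target $t$ is then $|Y| = n - |{\cal C}| + 1$, which ranges in $[0, n-1]$ as $|{\cal C}|$ ranges over its possible values (at least one set, and unbalancedness forces $|{\cal C}| \geq 2$, giving $t \leq n-1$; $t = 0$ corresponds to the cover of the empty set).

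The main step is to verify that $\cal D$ is a \emph{minimal} set cover of $Y$ and that the map is reversible. For minimality, I would invoke Remark~\ref{loyal-rem}: I claim every set in $\cal D$ has a loyal vertex of $\cal D$. Suppose a trace $A \cap Y$ (with $A \in {\cal C}'$ having private vertex $v_A \in V'$) had no loyal vertex in $\cal D$; then every $y \in A \cap Y$ lies in some other trace, so $y$ is covered in $\cal C$ by sets other than $A$. Combined with the fact that $A$'s only private vertex $v_A$ is also in other sets of $\cal C$ only if — wait, $v_A$ is loyal in $\cal C$, so $v_A$ is \emph{not} covered by any other set. Hence $A$ is not contained in the union of the other sets of $\cal C$ after all, consistent with $\cal C$ being minimal, but this says nothing obstructive. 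The resolution: because $A = \{v_A\} \cup (A\cap Y)$ and $v_A$ is loyal in $\cal C$, the set $A$ is forced into $\cal C$ by $v_A$; when we delete $v_A$ we may indeed destroy minimality, so $\cal D$ need \emph{not} be minimal as stated — instead I expect the correct statement routes through the split-graph picture. So the cleaner route, which I would actually adopt, is: compose the bijection of Theorem~\ref{Royle-thm} (set covers $\leftrightarrow$ split graphs) with the Compilation Theorem for split graphs (Theorem~\ref{compil-split-thm}), using Theorem~\ref{set-preserve-bal} to know that unbalanced set covers correspond exactly to unbalanced split graphs. Concretely: an unbalanced minimal set cover of an $n$-set maps (Theorem~\ref{Royle-thm}) to an unbalanced split graph on $n$ vertices, which maps (Theorem~\ref{compil-split-thm}) to a split graph on $t$ vertices for some $0 \le t \le n-1$, which maps back (Theorem~\ref{Royle-thm}) to a minimal set cover on $t$ elements. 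Each arrow is a bijection, so the composite is a bijection, and $t$ hits every value in $[0,n-1]$.

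However, since the paper explicitly says each compilation theorem gets a direct proof ``without reverting to split graphs,'' the intended argument is almost surely the direct peeling construction, and the obstacle above is exactly where the subtlety lies: after removing $Y$ and its representative $u$, the remaining structure on $Y$ is not literally ${\cal C}'$ restricted to $Y$, but rather one should keep track of which vertices of $Y$ were ``absorbed.'' I would fix this by defining the target cover on the \emph{ground set} $Y$ to have one set per set of ${\cal C}'$, namely $A \cap Y$, and then argue minimality using that each $A \in {\cal C}'$ had a loyal vertex \emph{in $\cal C$} which, since it is not in $Y$ (loyal vertices of $\cal C$ other than the chosen representatives of $Y$ avoid $Y$), is the private $v_A$ — and here is the key point I was missing: the roles of $K$ and $S$ swap. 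In split-graph terms, the $n$-vertex unbalanced split graph has its clique $K$ of size $t-1 = |Y|-1$, and adding the swing vertex gives a clique of size $t = |Y|$; the $t$-vertex split graph is obtained as (roughly) $G[Y]$ with an adjusted partition. Translating that adjustment back to set covers tells me precisely which subsets of $Y$ to take. So the concrete plan is: (1) state the peeling map $\cal C \mapsto \cal D$ with ground set $Y$ and sets the nonempty traces, handling the representative $u$ of $Y$ carefully; (2) prove $\cal D$ is a minimal set cover of $Y$ by exhibiting a loyal vertex in each trace, using loyalty data inherited from $\cal C$; (3) describe the inverse: given a minimal set cover of a $t$-set $W$, adjoin $n - 2t + 1$ new elements... no — adjoin $(n-1) - (t-1) = n-t$ new private elements, one per set plus extras as isolated-forced sets, reconstructing $Y = W$ and $V = W \sqcup V'$; (4) check the two maps are mutually inverse and that $t = n - |{\cal C}| + 1$ sweeps $[0,n-1]$. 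The hard part, as flagged, is pinning down the inverse and the exact bookkeeping of private vertices and the representative $u$ so that minimality and unbalancedness are both preserved in both directions; once the split-graph analogue (Theorem~\ref{compil-split-thm}) is in hand as a sanity check via Theorem~\ref{Royle-thm} and Theorem~\ref{set-preserve-bal}, the combinatorial details are forced and routine to verify.
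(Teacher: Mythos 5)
Your fallback argument --- compose the bijection of Theorem~\ref{Royle-thm} with Theorem~\ref{compil-split-thm}, using Theorem~\ref{set-preserve-bal} to match up the unbalanced subclasses --- is a correct proof of the statement, and the paper itself notes that the compilation theorems for the other classes follow this way; so as a proof the proposal stands. But it is worth being clear that the direct construction you sketch, and to which your ``concrete plan'' ultimately returns, is not the right one and would not survive the ``routine'' verification you defer. You take the new ground set to be $Y$ and the new cover to be the nonempty traces $A\cap Y$ for $A\in{\cal C}\setminus\{Y\}$. This fails in three ways: the loyal elements of $Y$ (those covered \emph{only} by $Y$) lie in no trace, so $\cal D$ is not even a cover of $Y$; minimality is lost, as you yourself observed, because the loyal vertex certifying each $A$ is its private vertex \emph{outside} $Y$, which you discard; and the parameter is wrong --- your $t=|Y|=n-|{\cal C}|+1$ requires $|{\cal C}|\ge 2$, but the single-set cover $\{V\}$ is an unbalanced minimal set cover with $|{\cal C}|=1$ (the set $V$ has cardinality $|V|-|{\cal C}|+1=n$), and your map would send it to a cover of an $n$-set, outside the target range. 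Your trace construction is really the $XY$-graph compilation argument in disguise, not the set-cover one.

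The paper's direct proof peels in the opposite direction: it keeps the sets of ${\cal C}\setminus\{Y\}$ \emph{intact} and instead shrinks the ground set to $V-V'$, where $V'$ is the set of elements covered by $Y$ and by no other set. Then $|V'|\ge 1$ (since $Y$ has a loyal vertex), so the new ground set has at most $n-1$ elements; minimality is automatic because every remaining set retains its own loyal vertex; and the single-set cover correctly maps to the empty cover of the empty set. The inverse adjoins $n-t$ new elements and one new set $Y$ consisting of those new elements together with all old elements except one designated loyal element per old set, which forces $|Y|=n-k'$ and hence unbalancedness. If you want a direct proof, this is the construction to write down; your composition proof is fine as it stands but is a genuinely different (and less self-contained) route.
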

\begin{proof}
Let $V$ be an $n$-set and ${\cal C}$ an unbalanced minimal set cover of $V$.  We show how to map ${\cal C}$ to a minimal set cover ${\cal C'}$ on a set with $n-1$ or fewer elements.    Let $k = |{\cal C}|$.  Since set cover ${\cal C}$ is unbalanced, there exists a set $Y$ in ${\cal C}$ with $|Y| = n-k+1.$  Let $V'$ be the set of elements of $V$ covered by $Y$ and not covered by any other set in ${\cal C}$.  We know $|V'| \ge 1$ since $Y$ has a loyal vertex, so $|V-V'| \le n-1$.  Define ${\cal C'}$ to be the set cover of $V-V'$ consisting of all the sets in ${\cal C}$ except $Y$.  Then $|{\cal C'}| = |{\cal C}| -1$.  Since ${\cal C}$ is a minimal set cover, each set $S$ in ${\cal C}$ contains a loyal vertex which is still loyal to $S$ in ${\cal C'}$.    Thus ${\cal C'}$ is a \emph{minimal} set cover of $V-V'$ and $|V-V'| \le n-1$. 

Conversely, let ${\cal C'}$ be a minimal set cover of a set $V'$ with $t$ elements, where $0 \le t \le n-1$.  Let $k' = |{\cal C'}|$.  For each set in ${\cal C'}$, designate a loyal element.  Create a new set $V$ consisting of $V'$ and $n-t$ additional elements, and a set cover ${\cal C}$ consisting of the sets in ${\cal C'}$ together with a set $Y$ that contains the $n-t$ new elements together with the elements of $V'$ other than the $k'$ loyal elements designated earlier.  It follows immediately that ${\cal C}$ is a minimal set cover of $n$-set $V$ and $|{\cal C}| = k' + 1$.    

We next show that ${\cal C}$ is unbalanced.    Note that $Y$ consists of the $n-t$ new elements together with $t-k'$ elements of $V'$, thus $|Y| = n-k'$.  Furthermore, $|{\cal C}| = |{\cal C'}| + 1 = k' + 1$ and $|V| - |{\cal C}|  + 1 = n - (k' + 1) + 1 = n-k' = |Y|$, so ${\cal C}$ is unbalanced.  

Finally, the second mapping reverses the first so we have a bijection.
\end{proof}

\section{ $XY$-Graphs}
\label{bip-sec}

In \cite{ChCoTr16}  we calculate the number of (unlabeled) unbalanced split graphs on $n$ vertices for $n = 1,2,3, \ldots$ and get the following sequence:  $1,2,4,8,17,38,94,258  \ldots$.     According to the  Online Encyclopedia of Integer Sequences (OEIS), this sequence also counts the number of bipartite graphs with a distinguished block.  This motivated us to seek connections between these two classes, and indeed the  proof of Theorem~\ref{OEIS-thm}   gives an explicit bijection between these classes.   We prefer to use the term \emph{$XY$-graph} rather than \emph{bipartite graph with a distinguished block} so that we may consistently 
refer to $X$ as the distinguished block and $Y$ as the other block.

\begin{Def} {\rm An \emph{$XY$-graph} is a  bipartite graph together with a bipartition of the vertices as $X \cup Y$.    The set $X$ is the \emph{distinguished part} of the bipartition. }
\end{Def}

While there are only  three distinct bipartite graphs on three vertices, there are eight  different $XY$-graphs on three vertices, as shown in 
Column 2 of Figure~\ref{big-fig}.  For example, the bipartite graph $P_3$ is counted as two different $XY$-graphs, one with $|X| = 1$ and one with $|X|=2$.    These eight $XY$-graphs 
  correspond to the unbalanced split graphs on four vertices, as described below in Theorem~\ref{OEIS-thm} and illustrated in  the first two columns of  Figure~\ref{big-fig}.

\begin{Thm}
There  is a bijection  between the class  of $XY$-graphs on $n$ vertices 
and the class of unbalanced split graphs on $n+1$ vertices.  
\label{OEIS-thm} 
\end{Thm}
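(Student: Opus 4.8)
The plan is to build the bijection by hand, matching an $XY$-graph $H$ on $n$ vertices to an unbalanced split graph $G$ on $n+1$ vertices, using the $S$-max $KS$-partition of $G$ and Remark~\ref{swing-rem} as the organizing tools. Given an $XY$-graph $H$ with parts $X$ and $Y$, I would construct $G$ as follows: take the vertex set $V(G) = X \cup Y \cup \{s_0\}$ where $s_0$ is a new vertex; make $K = X \cup \{s_0\}$ a clique; let $S = Y$ be a stable set; and put an edge between $y \in Y$ and $x \in X$ exactly when $xy \in E(H)$, while joining $s_0$ to \emph{every} vertex of $X$. The intuition (matching Table~\ref{unbal-table}) is that the distinguished block $X$ becomes the clique minus a swing vertex, $Y$ becomes the stable set, and $s_0$ plays the role of the vertex witnessing that $G$ is unbalanced: since $s_0$ is adjacent to all of $K \setminus \{s_0\} = X$, the set $(K\setminus\{s_0\}) \cup \{s_0\} = K$ is a clique and $s_0$ is a swing vertex, so by Remark~\ref{swing-rem}, $G$ is unbalanced. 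One must check that $K \cup S$ is actually the $S$-max partition, i.e.\ that $|S| = \alpha(G)$; this should follow because every $y \in Y$ is non-adjacent to $s_0$, so any stable set meeting $K$ in a vertex $k$ loses at least the vertex $s_0$-worth of flexibility — more carefully, one argues no stable set is larger than $Y$ using the structure, or simply invokes Hammer–Simeone (Theorem~\ref{split-thm}).

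\textbf{The reverse map.}
For the inverse, start with an unbalanced split graph $G$ on $n+1$ vertices. By Remark~\ref{swing-rem} it has a swing vertex, and I would take its (unique, since graphs are unlabeled) $S$-max partition $K \cup S$, which by Theorem~\ref{split-thm}(ii) has a swing vertex $s_0 \in S$ with $K \cup \{s_0\}$ complete. The subtlety here is whether $s_0$ is unique; if there are several swing vertices in $S$ the recipe must still be well-defined. I would handle this by noting that if there are $\ge 2$ swing vertices in $S$, the partition would also admit a $K$-max form or the graph would be balanced — more precisely, two vertices of $S$ each adjacent to all of $K$ are non-adjacent to each other, but then $K$ is not a maximal clique unless... one needs to verify that in an unbalanced $S$-max partition the swing vertex is in fact unique, which is plausible from Theorem~\ref{split-thm} since case (ii) is mutually exclusive with a symmetric statement. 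Once $s_0$ is pinned down, set $X = K \setminus \{s_0\}$, $Y = S$, and let $H$ be the bipartite graph on $X \cup Y$ with $xy \in E(H)$ iff $xy \in E(G)$; declare $X$ the distinguished block. Then $|V(H)| = (n+1) - 1 = n$.

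\textbf{Checking it is a bijection.}
Finally I would verify the two maps are mutually inverse. Going $H \to G \to H'$: the constructed $G$ has $s_0$ as its unique swing vertex in the $S$-max partition (by the uniqueness argument above, since no $y \in Y$ is adjacent to $s_0$), so peeling off $s_0$ recovers $X = K \setminus\{s_0\}$, $Y = S$, and the induced bipartite adjacencies are exactly $E(H)$ — so $H' = H$ as $XY$-graphs. Going $G \to H \to G'$: reattaching a new vertex $s_0'$ adjacent to all of $X = K\setminus\{s_0\}$ and to nothing in $Y$ rebuilds a clique of size $|K|$ and stable set of size $|S|$ with the same cross-edges; this is isomorphic to $G$ via the map sending $s_0' \mapsto s_0$, because in $G$ the swing vertex $s_0$ is adjacent to all of $K\setminus\{s_0\}$ (clique) and, being a swing vertex of an $S$-max partition, its neighborhood in $S$ can be taken empty after reassigning $s_0$ — here I should be careful: in $G$, $s_0$ as a vertex of $S$ may have neighbors in $S$? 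No, $S$ is stable. But $s_0$'s only role is adjacency to $K$; since $s_0$ is adjacent to \emph{all} of $K \setminus \{s_0\}$ (as $K\cup\{s_0\}$ is complete) and to no other vertex of $S$, it matches $s_0'$ exactly. \textbf{The main obstacle} I anticipate is precisely this well-definedness issue: ensuring the $S$-max partition and its swing vertex are canonical, so that the reverse map does not depend on choices — this is where I would spend the most care, likely extracting a short lemma that an unbalanced split graph has a unique $S$-max partition (already asserted in the remark after Theorem~\ref{split-thm}) and that within it the swing vertex of $S$ is unique.
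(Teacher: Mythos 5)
Your forward map is essentially the paper's construction reflected: the paper sets $S=X$ and $K=Y\cup\{v\}$, whereas you set $S=Y$ and $K=X\cup\{s_0\}$; either choice can be made into a bijection, since they differ by the block-swap on $XY$-graphs. However, your write-up contains a genuine confusion between the two kinds of swing vertices (cases (ii) and (iii) of Theorem~\ref{split-thm}), and it breaks the reverse map. In your constructed $G$, the new vertex $s_0$ lies in $K$ and has \emph{no} neighbors in $S=Y$, so $S\cup\{s_0\}$ is a stable set larger than $S$: the partition you build is $K$-max, not $S$-max, and $s_0$ is a swing vertex because of its non-adjacency to $S$, not because ``$K$ is a clique.'' Consequently your stated plan to ``check that $|S|=\alpha(G)$'' cannot succeed. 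The reverse map then compounds this: you take the $S$-max partition and a case-(ii) swing vertex $s_0\in S$ (one adjacent to \emph{all} of $K$), yet define $X=K\setminus\{s_0\}$, $Y=S$. Since $s_0\notin K$, this removes nothing and leaves $n+1$ vertices, contradicting your own count $|V(H)|=n$. The fix is either to use the $K$-max partition and delete a swing vertex $s_0\in K$ with no neighbors in $S$ (setting $X=K\setminus\{s_0\}$, $Y=S$, which is what the paper does with the roles of $X$ and $Y$ exchanged), or to use the $S$-max partition $(K',S')$ and set $X=K'$, $Y=S'\setminus\{s_0\}$; as written you have spliced half of each.

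The second problem is the lemma you defer to: that the swing vertex is unique. It is false. Take $K=\{a\}$, $S=\{b,c\}$ with both $b$ and $c$ adjacent to $a$ (the graph $P_3$): this partition is $S$-max and unbalanced, and both $b$ and $c$ are swing vertices. Likewise, in your constructed $G$, if some $y\in Y$ is adjacent to all of $X$ in $H$, then $y$ and $s_0$ are both swing vertices of the $S$-max partition, so your claim that ``$s_0$ is the unique swing vertex\dots since no $y\in Y$ is adjacent to $s_0$'' is a non sequitur. What saves the construction (and what the paper relies on implicitly when it says ``choose a swing vertex'') is not uniqueness but \emph{interchangeability}: all swing vertices of a given partition have identical neighborhoods (adjacent to everything on one side, nothing on the other), so any choice yields the same unlabeled graph. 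With the reverse map corrected and uniqueness replaced by interchangeability, your argument goes through and is a legitimate variant of the paper's proof.
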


\begin{proof}
 Let $G$ be an $XY$-graph on $n$ vertices and form a split graph $H$ on $n+1$ vertices by adding a new vertex $v$ as follows.  Let $K=Y\cup\{v\}$,  let $S= X$, retain all edges from $G$, and  add an edge between each pair of distinct vertices in $K$.  Since vertex $v$ is not adjacent to any vertex in $S$, we know $S$ is not a maximum stable set in $H$ and thus $H$ is an unbalanced split graph and the given $KS$-partition is $K$-max.

Conversely, let $H$ be an unbalanced split graph on $n+1$ vertices and fix a $KS$-partition of $G$ that is $K$-max.  Chose a swing vertex $v$ in $K$ to remove and let $G$ be the  $XY$-graph with $X = S $, $Y =K-\{v\}$ and $E(G) = \{xy: x \in X, y \in Y, xy \in E(H)\}$.   It is easy to see that these functions are inverses, so they provide a bijection.
\end{proof}

In an $XY$-graph, we say a vertex of $Y$ is an \emph{isolate} if it has no neighbors (in $X$) and that a vertex of $X$ is \emph{universal} if it is adjacent to every vertex of $Y$.

While Theorem~\ref{OEIS-thm} provides a connection between  $XY$-graphs and split graphs,   the number of vertices changes from $n$ to $n+1$ in that result.  In the next theorem, the number of vertices is $n$ for both classes and we see that  $XY$-graphs with no isolates in $Y$ serve as an analog of split graphs. Column 5 in Figure~\ref{big-fig} shows split graphs on four vertices with an $S$-max $KS$-partition; the $XY$-graphs  on four vertices with no isolates in $Y$ are identical  where $X=S$, $Y=K$, and we remove all edges  between vertices in $K$. 

\begin{Thm}  
There is a bijection between   split graphs on $n$ vertices and   $XY$-graphs on $n$ vertices with no isolates in $Y$.   
\label{split-analog-thm}
\end{Thm}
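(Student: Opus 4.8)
The plan is to use the correspondence suggested by column~5 of Figure~\ref{big-fig}: pass between a split graph and its $S$-max $KS$-partition regarded as a bipartite graph. Since we work with unlabeled graphs, each split graph $G$ has an essentially unique $S$-max $KS$-partition $K \cup S$ (the uniqueness remark following Theorem~\ref{split-thm}), so the following map is well defined: let $\phi(G)$ be the $XY$-graph with distinguished part $X = S$, other part $Y = K$, and edge set consisting of exactly the edges of $G$ joining $S$ to $K$ (discarding all edges inside $K$). First I would check that $\phi(G)$ has no isolate in $Y$: if some $k \in K$ had no neighbor in $X = S$, then $S \cup \{k\}$ would be a stable set of $G$ larger than $S$, contradicting $|S| = \alpha(G)$. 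Hence $\phi$ lands in the target class.

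For the inverse, given an $XY$-graph $H$ on $n$ vertices with no isolate in $Y$, let $\psi(H)$ be the split graph obtained by adding all edges among the vertices of $Y$, so that $K := Y$ becomes a clique and $S := X$ remains a stable set. The key step is to show that this $KS$-partition is $S$-max. It does give a maximal stable set $S$ (no isolate in $Y$ means no vertex of $Y$ can be appended to $S$), but being maximal is not the same as being maximum, so I would invoke Theorem~\ref{split-thm}: if the partition $K \cup S$ were unbalanced and $K$-max, there would be a swing vertex $k \in K = Y$ with $S \cup \{k\}$ stable, i.e.\ $k$ adjacent to no vertex of $X$, contradicting that $Y$ has no isolate. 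So the partition is either balanced or unbalanced $S$-max, and in both cases $|S| = \alpha(\psi(H))$; thus it is $S$-max.

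Finally I would verify that $\phi$ and $\psi$ are mutually inverse. Starting from $G$ with its $S$-max partition $K \cup S$, the graph $\psi(\phi(G))$ is obtained from $G$ by deleting and then re-adding exactly the edges inside $K$, so $\psi(\phi(G)) = G$; moreover the distinguished partition used to build $\phi(G)$ is $X = S$, $Y = K$, which is the ($S$-max) partition $\phi$ reads off of $\psi(\phi(G))$, so applying $\phi$ again recovers the same $XY$-graph. Conversely, for an $XY$-graph $H$ with no isolate in $Y$, the partition $K = Y$, $S = X$ of $\psi(H)$ is $S$-max by the previous paragraph, hence is the one $\phi$ uses; so $\phi(\psi(H))$ restores the $X$--$Y$ edges of $H$ and removes the edges we had added inside $Y$, giving back $H$. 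Injectivity and surjectivity of $\phi$ then follow formally from $\psi\circ\phi = \mathrm{id}$ and $\phi\circ\psi = \mathrm{id}$. The only real obstacle is the middle step — ruling out the $K$-max case for $\psi(H)$, so that the reconstructed partition is genuinely $S$-max rather than merely yielding a maximal stable set — and this is exactly where the ``no isolate in $Y$'' hypothesis (equivalently, no swing vertex in $K$, by Remark~\ref{swing-rem} and Theorem~\ref{split-thm}) does the work.
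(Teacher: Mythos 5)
Your proposal is correct and uses exactly the same bijection as the paper: read off the $S$-max partition as $X=S$, $Y=K$, keep only the $S$--$K$ edges, and invert by completing $Y$ to a clique. The only difference is that you spell out, via the Hammer--Simeone trichotomy, why ``no isolate in $Y$'' forces the reconstructed partition to be $S$-max, a point the paper's proof asserts in one clause; your elaboration is a correct and welcome filling-in of that detail.
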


\begin{proof}
Let $G$ be a split graph on $n$ vertices and fix an $S$-max $KS$-partition of $G$.  Let $X =S$ and $Y=K$ and let $H$ be the $XY$-graph with $E(H) = \{xy\in E(G): x \in X, y \in Y\}$.  Then $|V(H)| = n$ and there are no isolates in $Y$ since the $KS$-partition of $G$ was $S$-max.

Conversely, let $H$ be an $XY$-graph on $n$ vertices with no isolates in $Y$.  Form a split graph $G$ by letting $S=X$, $K=Y$ and $E(G) = E(H) \cup \{yz:y,z \in Y, y \neq z\}$.  Then $|V(G)| = n$ and the resulting $KS$-partition is $S$-max since  in graph  $H$ there were no isolates in $Y$.  
These functions are  inverses, so provide a bijection.    
\end{proof}

As we saw in Theorem~\ref{split-analog-thm},  $XY$-graphs with no isolates in $Y$ provide an analog of split graphs.  
We next define balanced and unbalanced for $XY$-graphs with no isolates in $Y$ and then prove that the bijection given in Theorem~\ref{split-analog-thm} preserves balance.  As in our previous definitions of unbalanced, the class is defined by the existence of some structure, in this case, a universal vertex in $X$ (see Table~\ref{unbal-table}).

\begin{Def} {\rm
An $XY$-graph  with no isolates  in $Y$ is \emph{unbalanced} if there   exists a universal vertex in $X$, and \emph{balanced} otherwise.
}
\end{Def}

 Note that we could have instead chosen  $XY$-graphs with a universal vertex in $X$ for our analogue of split graphs.  However, the definition of unbalanced would remain the same, namely, $XY$-graphs with a universal vertex in $X$ and no isolates in $Y$.
 
\begin{Thm}  
The bijection given in the proof of Theorem~\ref{split-analog-thm} maps unbalanced split graphs to unbalanced $XY$-graphs (with no isolates in $Y$) and    balanced split graphs to  balanced $XY$-graphs (with no isolates in $Y$). 
\label{preserve-balance-thm}
\end{Thm}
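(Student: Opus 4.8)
The plan is to follow a single structural feature — a vertex of $S$ adjacent to every vertex of $K$ — through the bijection of Theorem~\ref{split-analog-thm}. First I would record how that bijection acts on adjacencies. On the split-graph side the chosen $KS$-partition is $S$-max, the correspondence is $X=S$ and $Y=K$, and the edges of $H$ are exactly the $G$-edges joining $S$ to $K$. Hence for every $s\in S=X$, the $G$-neighbours of $s$ lying in $K$ are precisely the $H$-neighbours of $s$; in particular, $s$ is adjacent in $G$ to all of $K$ if and only if $s$ is a universal vertex of $X$ in $H$.

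Next I would express unbalance of $G$ in terms of such a vertex. Since the $KS$-partition is $S$-max, Remark~\ref{swing-rem} together with case~(ii) of Theorem~\ref{split-thm} shows that $G$ is unbalanced if and only if it has a swing vertex $s\in S$, i.e.\ a vertex $s\in S$ with $K\cup\{s\}$ complete, equivalently a vertex $s\in S$ adjacent in $G$ to every vertex of $K$. (For the non-trivial direction: if some $s\in S$ were adjacent to all of $K$ while $G$ is balanced, then $K\cup\{s\}$ would be a clique of size $\omega(G)+1$, which is impossible.) Chaining this with the adjacency observation of the previous paragraph yields
\[
G \text{ unbalanced} \iff \bigl(\exists\, s\in S \text{ adjacent to all of } K\bigr) \iff \bigl(X \text{ has a universal vertex in } H\bigr) \iff H \text{ unbalanced}.
\]

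Reading this chain forward shows that the bijection of Theorem~\ref{split-analog-thm} sends unbalanced split graphs to unbalanced $XY$-graphs, and reading it backward — via the explicit inverse map described in Theorem~\ref{split-analog-thm} — that it sends unbalanced $XY$-graphs to unbalanced split graphs; negating the biconditional and using that the map is a bijection then gives the balanced-to-balanced statement. I do not anticipate a genuine obstacle: the proof is essentially a translation of the swing-vertex criterion. The one point to be careful about is that the swing vertex must be located in $S$ and not in $K$, which is exactly what the $S$-max hypothesis supplies through Theorem~\ref{split-thm}(ii); one should also check that the degenerate situations (for instance $Y=\varnothing$, where every vertex of $X$ is vacuously universal and the corresponding split graph is a stable set) are read consistently on both sides, which they are, since the equivalence chain above handles the vacuous quantifiers the same way in each class.
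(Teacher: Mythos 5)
Your proposal is correct and follows essentially the same route as the paper's proof: both identify unbalance of $G$ (under an $S$-max partition) with the existence of a swing vertex in $S$, identify unbalance of $H$ with a universal vertex in $X$, and observe that the bijection carries one to the other. Your version merely spells out the adjacency bookkeeping and the degenerate case $Y=\varnothing$ more explicitly than the paper does.
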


\begin{proof} By Remark ~\ref{swing-rem}, a split graph is unbalanced if and only if its $S$-max $KS$-partition has a swing vertex in $S$.  An $XY$-graph with no isolate in $Y$ is unbalanced if and only if it has a universal vertex in $X$.  The bijection   given in the proof of Theorem~\ref{split-analog-thm} maps swing vertices in $S$ to universal vertices in $X$ and vice versa, so it preserves balance.
\end{proof}




The next result is our Compilation Theorem  for $XY$-graphs with no isolates in $Y$.      Note that by definition, an $XY$-graph that is balanced or unbalanced has no isolates in $Y$.

\begin{Thm}
There is a bijection between the set of unbalanced $XY$-graphs on $n$ vertices  and the union of the sets of  unbalanced and balanced $XY$-graphs on $t$   vertices where $0 \le t \le n-1$. 
\label{xy-compilation-thm}
\end{Thm}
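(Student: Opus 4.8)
The plan is to construct the bijection directly in the language of $XY$-graphs, in the same spirit as the proof of Theorem~\ref{compil-set-thm}. Recall that, by the remark preceding the statement, ``balanced or unbalanced $XY$-graph on $t$ vertices'' is the same as ``$XY$-graph on $t$ vertices with no isolate in $Y$'', so the target class is simply all such graphs with $t\le n-1$. The defining structure of an unbalanced $XY$-graph is a universal vertex of $X$; accordingly, the forward map will delete one such vertex $u$ together with the vertices of $Y$ whose \emph{only} neighbour is $u$, and the reverse map will adjoin a fresh universal vertex of $X$ together with exactly the right number of $Y$-vertices attached to it alone.

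For the forward map, let $G$ be an unbalanced $XY$-graph on $n$ vertices, pick a universal vertex $u\in X$, and let $P_u=\{y\in Y: N_G(y)=\{u\}\}$ be its set of private $Y$-neighbours; set $\Phi(G)=G-u-P_u$ with distinguished part $X\setminus\{u\}$. I would first note that $\Phi(G)$ has no isolate in its $Y$-part, since every vertex of $Y\setminus P_u$ retains a neighbour in $X\setminus\{u\}$, and that $|V(\Phi(G))|=n-1-|P_u|\in\{0,1,\dots,n-1\}$ because $|P_u|\le |Y|\le n-1$ (the presence of $u$ forces $|X|\ge 1$). The one point that needs care is that $\Phi$ is well defined on \emph{unlabeled} graphs. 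If $u,u'$ are both universal in $X$, then---because an $XY$-graph has no edges within $X$ or within $Y$---they have the same neighbourhood $Y$ and the same non-neighbourhood, so the transposition $(u\ u')$ is an automorphism of $G$; hence no vertex of $Y$ is adjacent to exactly one of $u,u'$, so $P_u=P_{u'}=\emptyset$ and this automorphism exhibits $G-u\cong G-u'$. Thus the choice of universal vertex is immaterial.

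For the reverse map, given an $XY$-graph $H$ on $t$ vertices with no isolate in its $Y$-part and $0\le t\le n-1$, let $\Psi(H)$ be obtained by adding one vertex $u$ to the distinguished part, joined to every vertex of $Y_H$, and $n-1-t$ further vertices to the $Y$-part, each joined only to $u$; then $\Psi(H)$ is an unbalanced $XY$-graph on $n$ vertices. It remains to check that $\Phi$ and $\Psi$ are mutually inverse. For $\Phi\circ\Psi=\mathrm{id}$: in $\Psi(H)$ no vertex of $Y_H$ is private to $u$ (it already has a neighbour in $X_H$), so selecting $u$ and deleting it together with the $n-1-t$ new vertices returns $H$; the only ambiguity is when $\Psi(H)$ has a second universal vertex, which forces $t=n-1$ and $H$ itself universal, and then every universal vertex has empty private set and deleting any one returns a graph isomorphic to $H$. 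For $\Psi\circ\Phi=\mathrm{id}$: reattaching a universal vertex and $|P_u|$ vertices joined only to it reconstitutes $G$ up to relabelling. Since the graphs are unlabeled, both composites are the identity, and $t=0$ does occur (e.g.\ $K_1$ with its vertex in $X$ maps to the empty graph). The genuinely subtle step is the well-definedness of $\Phi$ and the attendant bookkeeping when $G$ has more than one universal vertex in $X$; everything else is routine verification.
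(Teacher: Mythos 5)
Your proof is correct and follows essentially the same construction as the paper: delete a universal vertex of $X$ together with the $Y$-vertices left without neighbours (the paper's $Y'$ is exactly your $Y\setminus P_u$), and invert by adjoining a fresh universal vertex plus the right number of pendant $Y$-vertices. Your additional check that the map is well defined on unlabeled graphs when $X$ contains more than one universal vertex is a point the paper leaves implicit, but it does not change the argument.
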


\begin{proof}
Let $G$ be an unbalanced $XY$-graph  on $n$ vertices  and let $u$ be a universal vertex in $X$.  Form the $X'Y'$-graph $H$ as follows:  let $X' = X - \{u\}$, let $Y'$ consist of the elements of $Y$ that are adjacent  in $G$ to a vertex of $X'$, and $E(H) = E(G) \cap \{xy: x \in X', y \in Y'\}$.    By construction, there are no isolates in $Y'$ and $|V(H)| \le n-1$ as desired.

Conversely, let $H$ be an $X'Y'$-graph on at most $n-1$ vertices  that is balanced or unbalanced,  and hence has   no isolates in $Y'$.   Let $Y$ consist of the vertices of $Y'$ plus   $(n-1) - |V(H)|$  additional isolates  and let $X$ consist of the vertices of $X'$ plus a universal vertex $u$ so that $E(G) = E(H) \cup \{uy: y \in Y\}$.    The result is an $XY$-graph on $n$ vertices with no isolates in $Y$ (because each vertex of $Y$ is adjacent to $u$).    
These functions are inverses and thus provide a bijection between the two classes.
\end{proof}


\section{Bipartite Posets }
\label{poset-sec}

A \emph{poset} $P$ consists of a non-empty set $V$ together with a relation $\prec$ on $V$ that is irreflexive, transitive, and therefore antisymmetric.  Two elements (or \emph{points})  $u,v, \in V$ are \emph{comparable} if $u \prec v$ or $v \prec u$, and \emph{incomparable\/} otherwise.   A poset has \emph{height at most one} if there do not exist three elements $x,y,z$ with $x \prec y \prec z$.  Such posets are also known as \emph{bipartite posets}. 
In a  biparite poset,  an element  $v$ has \emph{height 1} if  there is a $u$ with $u \prec v$ and height 0 otherwise.   In this paper, we consider \emph{unlabeled\/} posets, that is, two posets are considered the same if they are isomorphic.   Figure~\ref{big-fig} shows all bipartite posets on four points, where the height 1 points are shown in black.   The examples in the last two columns of this figure can be used to illustrate the proof of the next theorem.

\begin{Thm}
There is a bijection between split graphs on $n$ vertices and  bipartite posets  on $n$ elements.
\label{poset-bij-thm}
\end{Thm}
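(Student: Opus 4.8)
The plan is to mimic the structure of Royle's bijection (Theorem~\ref{Royle-thm}) between split graphs and minimal set covers, but targeting bipartite posets directly. Given a split graph $G$, I would first fix an $S$-max $KS$-partition. The idea is to build a bipartite poset on the same vertex set $V = K \cup S$ by declaring the height-0 elements to be $S$ and the height-1 elements to be (most of) $K$, with the order relation $s \prec k$ whenever $sk \in E(G)$ for $s \in S$, $k \in K$. However, this cannot be quite right as stated: a bipartite poset has no constraint forcing every height-1 element to have a predecessor in a fixed ``bottom'' set, and conversely some vertices of $K$ may need to end up as height-0 points. So the first real step is to get the cardinalities and the height assignment to match up correctly, presumably using the same device as in the set-cover proof — choose one representative per ``block'' and peel it off.

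Concretely, I would proceed as follows. First, going from a split graph $G$ to a poset: take the $S$-max partition, and for each $k \in K$ consider its down-neighborhood $N(k) \cap S$. Since the partition is $S$-max, every $k \in K$ is adjacent to some vertex of $S$, so each $N(k)\cap S$ is nonempty; set $k$ to be a height-1 point lying above exactly the vertices of $N(k) \cap S$, and let all of $S$ be height-0 points. One must check transitivity and height-at-most-one are automatic (they are, since all relations go from $S$ to $K$), and that the construction is well defined on unlabeled objects. The subtlety is whether distinct split graphs can give isomorphic posets and whether every bipartite poset arises — this is where I expect to need a canonical choice. Second, the reverse map: given a bipartite poset $P$ with height-0 set $B_0$ and height-1 set $B_1$, form a split graph by taking $S = B_0$, $K = B_1$, making $K$ a clique, and joining $s \in S$ to $k \in K$ iff $s \prec k$ in $P$. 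Then verify this is a split graph, that the partition produced is $S$-max (which needs: every element of $B_1$ has a predecessor — true by definition of height 1 — and we need no vertex of $B_1$ to be joinable into $S$, or rather we just need $|S|=\alpha$, which may require an argument), and that the two maps are mutually inverse.

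The main obstacle I anticipate is exactly the bookkeeping that appeared in the set-cover bijection: a bipartite poset may have height-0 points that are below \emph{several} height-1 points, or height-1 points whose predecessor sets coincide, and on the split-graph side one must ensure the recovered $KS$-partition is genuinely $S$-max rather than merely \emph{a} partition. In particular, a height-0 point of $P$ that is below every height-1 point would correspond to a swing vertex on the split side, and I need the correspondence to handle balanced versus unbalanced cases uniformly — so I expect to either (a) argue directly that the partition $S = B_0$, $K = B_1$ is always $S$-max, or (b) if it is not, apply a normalization moving one vertex across the partition, parallel to the ``swing vertex'' clause of Theorem~\ref{split-thm}. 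Verifying $S$-maxness amounts to showing $\alpha(G) = |B_0|$: any stable set meeting $K = B_1$ in more than one vertex is impossible since $K$ is a clique, and a stable set using one vertex $k \in B_1$ together with non-neighbors in $B_0$ has size at most $1 + |B_0 \setminus (\text{predecessors of }k)|$, so one needs every height-1 point to have at least one predecessor — which is precisely the definition of height 1. Assembling these observations, checking well-definedness on isomorphism classes, and confirming the maps invert each other will complete the proof; the rest is routine.
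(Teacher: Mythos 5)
Your construction is exactly the bijection the paper uses: height-0 points become $S$, height-1 points become $K$ (made into a clique), comparabilities become the $S$--$K$ edges, and $S$-maxness of the recovered partition follows precisely because every height-1 point has a predecessor. The extra normalization you anticipate is not needed (the $S$-max partition of an unlabeled split graph is unique, and your direct count showing $\alpha(G)=|B_0|$ already settles $S$-maxness), so your argument is correct and essentially identical to the paper's.
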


\begin{proof}  Let $P$ be a bipartite poset.  Let $K$ be the set of height one elements of $P$ and $S$ be the set of height 0 elements of $P$.  Form a split graph $G$ with $KS$-partition as follows:  $V(G) = K \cup S$ and $E(G) = \{uv: u \prec v \hbox{ in $P$} \} \cup \{k\ell: k, \ell \in K, k \neq \ell\}$.  
 Each  vertex   of $K$  corresponds to a height 1 element of $P$ which by definition must be  comparable to a height 0 element of $P$.    Thus each vertex of $K$ is  adjacent to a vertex of $S$ and consequently, the $KS$-partition of $G$ is $S$-max.

 Conversely, let $G$ be a split graph on $n$ vertices and fix an $S$-max $KS$-partition of it.     Form a poset  $P = (V, \prec)$ as follows:  $V = K \cup S$ and $x \prec y$ if and only if $x \in S, y \in K$, and $xy \in E(G)$.   
 Since the $KS$-partition of $G$  is $S$-max, each element of $K$ is adjacent to an element of $S$, and thus the elements of $K$ become height one elements of $P$.
 
 These functions are inverses and thus provide bijections.  
\end{proof}

We next introduce the notions of balanced and unbalanced for bipartite posets.  Once again, unbalanced bipartite posets are defined by the existence of some structure, in this case, a   full support point (see Table~\ref{unbal-table}).    In column 4 of  Figure~\ref{big-fig}, the full support points are shown in the lighter shade of gray.
  
\begin{Def}{\rm
 Let $P$ be a bipartite poset.  A point at  height 0 is called a \emph{full support point} if it is comparable to every height one point.   Points at height 0  that are not full support points are called \emph{partial support points}. Poset $P$  is \emph{unbalanced} if it contains a     full support  point and \emph{balanced} otherwise.}
\end{Def}

\begin{Thm} The bijection  in Theorem~\ref{poset-bij-thm}
 preserves balance.

\end{Thm}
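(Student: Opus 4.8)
The plan is to mimic the structure of the previous balance-preservation theorems (Theorems~\ref{set-preserve-bal} and~\ref{preserve-balance-thm}): identify the structural feature that defines ``unbalanced'' in each class and show the bijection of Theorem~\ref{poset-bij-thm} carries it across in both directions. By Remark~\ref{swing-rem}, a split graph $G$ is unbalanced if and only if its (unique) $S$-max $KS$-partition has a swing vertex $s \in S$, i.e. a vertex $s \in S$ adjacent in $G$ to every vertex of $K$. A bipartite poset $P$ is unbalanced if and only if it has a full support point, i.e. a height 0 point comparable to every height 1 point. So the whole theorem reduces to the claim that, under the correspondence of Theorem~\ref{poset-bij-thm}, swing vertices in $S$ correspond exactly to full support points.

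First I would take the forward direction: let $G$ be an unbalanced split graph with its $S$-max $KS$-partition, and let $P$ be the poset produced by the bijection, so $K$ is the set of height 1 points, $S$ the set of height 0 points, and $x \prec y$ iff $x \in S$, $y \in K$, $xy \in E(G)$. If $s \in S$ is a swing vertex of $G$, then $s$ is adjacent in $G$ to every $k \in K$, hence $s \prec k$ for every height 1 point $k$, so $s$ is a full support point of $P$ and $P$ is unbalanced. Conversely, if $s \in S$ is a full support point of $P$, then $s \prec k$ for every height 1 point $k$, i.e. $s$ is adjacent to every vertex of $K$, so $K \cup \{s\}$ is a clique and (by Theorem~\ref{split-thm}, since the partition is $S$-max) $s$ is a swing vertex, making $G$ unbalanced. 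The reverse map (poset to split graph) is handled symmetrically: the defining relation $x \prec y \iff x \in S, y \in K, xy \in E(G)$ is the same equivalence run backwards, so the same argument shows a full support point of $P$ produces a swing vertex of the resulting split graph and vice versa.

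Putting these together: the bijection of Theorem~\ref{poset-bij-thm} sends unbalanced split graphs to unbalanced bipartite posets, and since a bijection restricted to complementary subclasses on each side is still a bijection, it also sends balanced split graphs to balanced bipartite posets. Hence it preserves balance.

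The only point requiring any care—and the main obstacle, though a mild one—is making sure the ``swing vertex'' characterization is applied in the correct partition: Remark~\ref{swing-rem} guarantees a swing vertex exists somewhere, but the bijection in Theorem~\ref{poset-bij-thm} is pinned to the $S$-max partition, so I must invoke Theorem~\ref{split-thm}(ii) to know that when $G$ is unbalanced the swing vertex lies in $S$ (and that adjacency of some $s \in S$ to all of $K$ indeed witnesses unbalance rather than a balanced partition). Once that is pinned down, everything else is an unwinding of the definitions of $\prec$, height, full support, and adjacency, with no real computation.
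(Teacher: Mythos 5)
Your proof is correct and takes essentially the same approach as the paper: both rest on the observation that a full support point of $P$ is precisely a vertex of $S$ adjacent to all of $K$ in $G$, i.e.\ a swing vertex of the $S$-max partition. The only cosmetic difference is that the paper verifies the balanced case directly (checking the partition is both $K$-max and $S$-max), whereas you obtain it by complementation; both are fine.
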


\begin{proof}
Let $P$ be an unbalanced  bipartite poset   and let $v$ be a full support  point in $P$.  We form a split graph $G$ as  in the proof of Theorem~\ref{poset-bij-thm}.   Since $v$ is a full support point in $P$, it is a vertex in $S$ that in $G$  is adjacent to all vertices in $K$.  Thus our $KS$-partition of $G$ is not $K$-max and consequently, $G$ is an unbalanced split graph.

Now suppose $P$ is a balanced  bipartite poset  and consider the resulting $KS$-partition of $G$.  The partition is $K$-max since there is no point in $P$ which is a full support point.  It is $S$-max since each height one point in $P$ is comparable to some height 0 point in $P$.  Thus $G$ is a  balanced split graph.
\end{proof}

The next theorem is our Compilation Theorem for Bipartite Posets.  While it follows from previous results,  the proof we give here is a direct proof about posets.

\begin{Thm}
There is a bijection between the set of unbalanced, bipartite posets on  $n$ points  and the set of bipartite posets on at most $t$ points for $0 \le t \le n-1$.
\end{Thm}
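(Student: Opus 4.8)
The plan is to give a direct argument about posets, parallel to the direct proofs of Theorem~\ref{compil-set-thm} and Theorem~\ref{xy-compilation-thm}, centered on the structure whose presence defines an unbalanced bipartite poset, namely a full support point. (The result also follows by transporting Theorem~\ref{xy-compilation-thm}, or the split-graph Compilation Theorem, through the bijection of Theorem~\ref{poset-bij-thm}, but the poset argument is short and self-contained.) The forward map will delete a full support point together with the height-1 points that lie above \emph{only} that point.

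Concretely, given an unbalanced bipartite poset $P$ on $n$ points, fix a full support point $v$. Let $K$ be the set of height-1 points of $P$, let $K'$ be the set of height-1 points that are comparable in $P$ to some height-0 point other than $v$, and let $Q$ be the subposet of $P$ induced on $(V(P) \setminus \{v\}) \setminus (K \setminus K')$. First I would observe that $Q$ is again a bipartite poset (an induced subposet of one), that every point of $K'$ still lies above some height-0 point of $Q$ and so is still at height 1 there, and that $|V(Q)| = n - 1 - |K \setminus K'| \le n - 1$, as required. For the reverse map, given a bipartite poset $Q$ on $t$ points with $0 \le t \le n-1$, form $P$ by adjoining a new point $v$ together with the relations $v \prec z$ for every height-1 point $z$ of $Q$, and then adjoining $n-1-t$ further points, each placed above $v$ and incomparable to everything else. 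Then $v$ is a full support point of $P$ (every height-1 point of $P$ — the original ones and the $n-1-t$ new ones — lies above $v$), $P$ has $n$ points, and so $P$ is unbalanced.

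Then I would check that the two maps are mutually inverse and well defined on unlabeled posets. The crucial observations are: a height-1 point $z$ of $P$ survives the forward map precisely when it lies above some height-0 point other than $v$, so the points deleted by the forward map are exactly $v$ together with the height-1 points lying above only $v$; when $Q$ is produced from $P$, reinserting a full support point restores exactly the relations $v$ had in $P$ (since $v$, being full support, was below every height-1 point of $P$, in particular below every one surviving into $Q$), and the $n-1-t$ re-added points match the ones deleted; and if $P$ happens to have two full support points $v, v'$, then no height-1 point lies above only $v$ (it also lies above $v'$), so the forward map merely deletes a full support point, and $P-v \cong P-v'$ because $v$ and $v'$ are twins. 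I expect the main obstacle to be exactly this bookkeeping — in particular, recognizing that deleting $v$ alone is wrong (it would always produce a poset on $n-1$ points, not on at most $n-1$) and that one must also discard the ``stranded'' height-1 points, the poset analog of discarding the $Y$-vertices that become isolates in the proof of Theorem~\ref{xy-compilation-thm}; once that analog is identified, the verification that the maps invert is routine.
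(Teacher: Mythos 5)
Your proof is correct, but the bijection you construct is not the one the paper uses, and the two genuinely differ as maps. You delete a \emph{single} full support point $v$ together with the height-$1$ points stranded above only $v$ --- the exact poset analog of the paper's proofs of Theorem~\ref{xy-compilation-thm} and of Theorem~\ref{compil-split-thm}, where one universal/swing vertex is removed along with the $Y$- or $K$-vertices it strands. The paper's map instead removes \emph{all} full support points at once, and copes with strandedness differently: if some height-$1$ point lies above only full support points, one such point $u$ is demoted to height $0$ and placed below all remaining height-$1$ points, becoming the full support point of $f(P)$. These really are different bijections (e.g., on the two-point chain $v\prec z$ your map yields the empty poset while the paper's yields a one-point poset, and on the two-point antichain the assignments are swapped), though of course both prove the theorem. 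Your version buys uniformity --- a single-case forward map that makes all four compilation theorems visibly the same argument, at the cost of the well-definedness check when several full support points are twins (which you correctly supply). The paper's version avoids choosing among full support points in its first case and keeps every height-$1$ point alive via the demotion trick, but pays with a two-case definition and a longer verification that $f$ and $g$ invert. Your bookkeeping --- that exactly $v$ and the points above only $v$ are deleted, that reinsertion restores $v$'s relations because $v$ was below every surviving height-$1$ point, and that multiple full support points are interchangeable --- is the right set of checks and holds up.
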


\begin{proof}
Let $P$ be an $n$-element unbalanced, bipartite poset.  We define $f(P)$ as follows.  If each height one point in $P$ is comparable to a  partial support point, then simply remove all full support points to arrive at $f(P)$. Note that $f(P)$ has no full support points. Otherwise, there is a height one point in $P$ 
that is comparable to precisely the set of full support points.  In this case, choose one such point, say  $u$.  Form $f(P)$  by removing all full support points and making $u \prec v$  for all $v \neq u$ that are height 1 points of $P$.  By construction, the full support points of $f(P)$ include $u$.

Next we define the inverse function $g$.  Let $Q$ be a bipartite poset with $t$ points where $t \le n-1$.  If $Q$ has no full support point, add $n-t$ points at height 0 to $g(Q)$ and make them comparable to all height one points.  The resulting $n$-point bipartite poset is $g(Q)$ and by construction,  the new points are exactly the full support points of $g(Q)$. 
Otherwise,  there is at least one full support point in $Q$.
In this case, 
choose one such point $v$ to become a height 1 element, and 
add $n-t$ points at height 0  and make them comparable to all height one points.   The resulting $n$-point bipartite poset is $g(Q)$ and  again, the new points are exactly the full support points of $g(Q)$. Since $n-t\geq 1$, in either case the resulting poset must have a full support point. Hence the result is an unbalanced, bipartite  poset on $n$-points.

We must show that for  all $Q$, $f(g(Q)) = Q$ and likewise for all  $P$, $g(f(P) )= P$. 
Start with a bipartite poset $Q$  with $t$  points where $0 \le t \le n-1$.    First consider the case in which there are no full support points in $Q$.    In this case, function $g$ adds a set of $n-t$ vertices at height 0 and makes them each a full support point to result  in $g(Q)$.    These $n-t$ elements are the only full support points in $g(Q)$.  Now in $g(Q)$, each height 1 point   is comparable to a partial support point, for otherwise it would not be at height 1 in $Q$.   When $f$ is applied to $g(Q)$, it removes the $n-t$ full support points and no other changes are made, so $f(g(Q)) = Q$.

Next consider the case in which there is a full support point in $Q$.  Such vertices all have the same comparabilities and are thus interchangeable. The function $g$ chooses one such vertex $v$, moves it to height 1 and adds a set $B$ of $n-t$ full support vertices to arrive at $g(Q)$.  In $g(Q)$, the set of full support vertices is exactly $B$.
 Vertex $v$ is   interchangeable with any other point at height 1 in $g(Q)$ whose only comparabilities are to points in $B$.  When $f$ is applied to  $g(Q)$,  the elements of $B$ are removed and $v$  becomes a height 0 point  which is comparable to all remaining points at height 1.  Thus, $f(g(Q))=Q$.

Finally, we show $g(f(P) )= P$.     Start with an unbalanced, bipartite poset $P$ on $n$ points.        First consider the case in which all height 1 points are comparable to a partial support point.  In this case, function $f$  removes all  full support points to arrive at $f(P)$.  Since $P$ is unbalanced, $f$ removes at least one point. Note that there are no full support points in $f(P)$ and all points at height 1 in $f(P)$ are also at height 1 in $P$.  Now apply $g$ to $f(P)$.  This adds the same number of full support points as were removed by $f$, resulting in the original poset $P$.

Now consider the case in which there are points at height 1 whose only comparabilities in $P$ are to the full support points.  These points are interchangeable.  When $f$ is applied to $P$, one such point $v$ is chosen and moved to height 0 in $f(P)$  where it becomes a full support point in $f(P)$ and the original full support points in $P$ are removed. In this case,  $f(P)$ has at least one full support point, namely $v$, and if it has others, they are interchangeable with $v$.  When $g$ is applied to $f(P)$,  the same number of full support points are added as were removed by $f$, and point $v$ is moved to height 1 where it is above the new points, resulting in the original poset. Thus $g(f(P))= P$.
\end{proof}

\section{Remaining Bijections}

In each of the previous three  sections, we gave a bijection  between split graphs and another combinatorial class.  Here we compare these combinatorial classes to each other, giving a direct bijection between each pair of the three new classes and proving these bijections preserve balance.  The bijection in Theorem~\ref{set-cov-poset} is
  illustrated in Figure~\ref{big-fig} by comparing entries in the same row of columns 3 and 4.

\begin{Thm} There is a balance-preserving bijection between minimal set covers on a set with $n$ vertices  and  bipartite  posets with $n$ elements.   
\label{set-cov-poset}
\end{Thm}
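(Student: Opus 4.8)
The quickest route to Theorem~\ref{set-cov-poset} is composition: Theorem~\ref{Royle-thm} together with Theorem~\ref{set-preserve-bal} gives a balance-preserving bijection between minimal set covers on an $n$-set and split graphs on $n$ vertices, while Theorem~\ref{poset-bij-thm} and the balance-preservation result stated immediately after it give a balance-preserving bijection between split graphs on $n$ vertices and bipartite posets on $n$ elements; composing these proves the claim. However, since the purpose of this section is to exhibit each pairwise bijection directly, I would unwind this composition, state the resulting map explicitly, and then verify everything intrinsically (using the composition only as a consistency check).

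The direct map is this. Given a minimal set cover $\mathcal{C}$ of $V$, choose a loyal representative from each member of $\mathcal{C}$ (possible by Remark~\ref{loyal-rem}); let $S$ be the set of these representatives and $K = V \setminus S$. Build a bipartite poset $P$ on $V$ by placing the points of $S$ at height $0$, the points of $K$ at height $1$, and declaring $s \prec k$ for $s \in S$, $k \in K$ exactly when $s$ and $k$ lie in a common member of $\mathcal{C}$. Conversely, given a bipartite poset $P$ with height-$0$ set $S$ and height-$1$ set $K$, let $\mathcal{C}$ be the collection whose members are the sets $\{s\} \cup \{k \in K : s \prec k\}$, one for each $s \in S$.

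I would then verify four things. \emph{(1) Well-definedness and correct target class:} for the forward map, transitivity of $\prec$ is vacuous (height at most one) and no two points of $S$, nor two points of $K$, are comparable, so $P$ is a bipartite poset; each $k \in K$ lies in some member of $\mathcal{C}$, which also contains that member's representative, so $k$ genuinely has height $1$. For the backward map, each $k \in K$ has height $1$ and so is covered; distinct $s, s' \in S$ give distinct sets (since $s \notin \{s'\} \cup K$), so $|\mathcal{C}| = |S|$; and each set $\{s\} \cup \{k : s \prec k\}$ contains the loyal vertex $s$, whence by Remark~\ref{loyal-rem} the collection is a minimal set cover. \emph{(2) Independence of choices:} because we work with unlabeled objects, a different choice of loyal representatives must be shown to produce an isomorphic poset; this is exactly the choice-independence already implicit in Royle's bijection, and it follows because any two systems of representatives differ by a relabeling of $V$ carrying one construction to the other. \emph{(3) Mutual inverseness:} trace through directly --- from $\mathcal{C}$, the chosen representative set reappears as $S$ and the member containing $s$ reappears as $\{s\}\cup\{k : s\prec k\}$; from $P$, the designated representative of $\{s\}\cup\{k:s\prec k\}$ is $s$ itself, and ``$s$ and $k$ share a member'' recovers $s \prec k$. \emph{(4) Balance:} if $\mathcal{C}$ is unbalanced, fix a member $Y$ with $|Y| = |V| - |\mathcal{C}| + 1$; as in the proof of Theorem~\ref{set-preserve-bal}, the remaining members have their loyal vertices outside $Y$, so we may take $S$ to be those vertices together with a loyal vertex $u \in Y$, forcing $K = Y \setminus \{u\}$; since all of $Y$ lies in the member $Y$, the point $u$ is comparable to every point of $K$, i.e.\ $u$ is a full support point. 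Conversely, if $P$ has a full support point $v$, then $v$'s member is $\{v\} \cup K$, of size $|K| + 1 = (|S| + |K|) - |S| + 1 = |V| - |\mathcal{C}| + 1$, so $\mathcal{C}$ is unbalanced.

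The main obstacle I expect is Step~(2): making the ``choose a loyal representative'' step genuinely well defined on unlabeled objects. Everything else is bookkeeping, but one must either phrase the construction so that the choice is visibly irrelevant up to isomorphism, or lean explicitly on the fact that Theorem~\ref{Royle-thm} already resolves this. A secondary point to watch is keeping the two notions of ``unbalanced'' --- a member of the prescribed cardinality versus a full support point --- correctly aligned through the short cardinality computation in Step~(4).
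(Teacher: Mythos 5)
Your direct construction (loyal representatives become the height-$0$ points, comparability given by co-membership, and the balance check via the cardinality $|V|-|\mathcal{C}|+1$ versus full support points) is exactly the paper's proof, just with the well-definedness and inverse checks spelled out more explicitly. Correct, and essentially the same approach.
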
 
\begin{proof}
 Let $ \cal C$ be a minimal set cover on an $n$-set $V$ and form a bipartite poset $P$ as follows. Choose one (representative)   loyal element from each set in $\cal C$ and let $S$ be the collection of these loyal elements.  Form $P = (V, \prec)$ by placing the elements of $S$ at height 0 and making $x \prec y$ precisely when $x \in S$, $y \in V-S$, and $x$ and $y$ are together in some set of $\cal C$.  Since each $y \in V-S$ is in some set of $\cal C$ and that set has a loyal element in $S$, each element of $V-S$ is at height 1.

The process is reversible.   Let $P$ be a bipartite poset with $n$ elements and form a minimal set cover $\cal C$ as follows.  For each height 0 element in $P$,  form  a set in $\cal C$  containing it and the points it is comparable to in $P$.  These sets form   a cover because each height 1 element is comparable to at least one height 0 element.  By {set-cov-poset}, the  points coming from height 0 elements are loyal, so by Remark~\ref{loyal-rem},
  $\cal C$ is minimal.

Furthermore, note that the number of height 0 elements in $P$  is $|{\cal C}|$, and therefore the number of height 1 elements is $|V| - |{\cal C}|$.  An element at height 0 will be a  full support point of $P$  if and only if  it is the representative loyal vertex chosen from  a set of $\cal C$ with $|V| - |{\cal C}| + 1$ elements.  Thus $\cal C$ is unbalanced precisely when $P$ is unbalanced.


\end{proof}

\begin{Thm} There is a balance-preserving  bijection between $XY$-graphs  on $n$ vertices with no isolates in  $Y$  and  minimal set covers on a set with $n$ vertices.
\end{Thm}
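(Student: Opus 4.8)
The plan is to build the bijection by composing the descriptions of $XY$-graphs and minimal set covers directly, using the vertex set $Y$ (respectively, the set system) as the common scaffold. Given an $XY$-graph $G$ on $n$ vertices with no isolates in $Y$, I would form a minimal set cover $\cal C$ of the ground set $V := X \cup Y$ as follows: for each vertex $y \in Y$, create a set $S_y$ consisting of $y$ together with its neighborhood $N_G(y) \subseteq X$. Since $G$ has no isolate in $Y$, each $S_y$ is nonempty (indeed $|S_y| \ge 2$), and since every $x \in X$ may or may not be covered, I must be careful here — but in fact an isolated vertex of $X$ would never be covered, so I should either forbid isolates in $X$ as well or, more cleanly, adjust the construction so that the union is all of $V$. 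The natural fix is to note that the bijection of Theorem~\ref{split-analog-thm} composed with Theorem~\ref{Royle-thm} already does exactly this, so the cleanest route is to define the map as the composition of those two bijections and simply verify that the composite has the claimed clean combinatorial description.

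Concretely, first apply the bijection of Theorem~\ref{split-analog-thm}, which sends an $XY$-graph $H$ on $n$ vertices with no isolates in $Y$ to a split graph $G$ on $n$ vertices with an $S$-max $KS$-partition, where $S = X$, $K = Y$, and the clique edges on $Y$ are added. Then apply Royle's bijection from Theorem~\ref{Royle-thm}, which sends a split graph with an $S$-max partition to a minimal set cover by forming, for each $s \in S$, the set $\{s\} \cup N_G(s)$. Composing, the image of an $XY$-graph is the set cover whose sets are $\{x\} \cup N_H(x)$ for $x \in X$; here the loyal vertices are precisely the elements of $X$ (since each $x$ sits in its own set only — its neighbors lie in $Y$ and appear across several sets), and minimality follows because each $x$ is loyal. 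Conversely, Royle's inverse composed with the inverse of Theorem~\ref{split-analog-thm} recovers the $XY$-graph: from a minimal set cover, designate loyal representatives, form the split graph, strip the clique edges on $K$, and set $X = S$, $Y = K$; the no-isolate condition on $Y$ holds because the partition produced is $S$-max.

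For the balance-preserving claim, I would invoke Theorem~\ref{preserve-balance-thm} and Theorem~\ref{set-preserve-bal}: the first says the Theorem~\ref{split-analog-thm} bijection carries balanced/unbalanced $XY$-graphs to balanced/unbalanced split graphs, and the second says Royle's bijection carries balanced/unbalanced split graphs to balanced/unbalanced minimal set covers. Composing two balance-preserving bijections is balance-preserving, which finishes the proof. Alternatively — and this is worth including as the more self-contained argument — I can give the direct dictionary: an $XY$-graph with no isolate in $Y$ is unbalanced iff it has a universal vertex $u \in X$; under the composite map, $u$ becomes a loyal element lying in a set $\{u\} \cup N_H(u) = \{u\} \cup Y$ of cardinality $|Y| + 1 = n - |X| + 1 = |V| - |{\cal C}| + 1$ (using $|{\cal C}| = |X|$), which is exactly the defining structure for an unbalanced minimal set cover; and the correspondence reverses.

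The main obstacle I anticipate is the bookkeeping around isolated vertices in $X$: the raw "$S_y = \{y\} \cup N(y)$" construction does not obviously cover an isolated vertex of $X$, so one must either observe that the problem never arises (because the compositional definition automatically covers all of $X$ — each $x \in X$ is covered by its own singleton-augmented set $\{x\} \cup N_H(x)$, which is the set indexed by $x$, not by a $y$), or restate the construction index by $X$ rather than by $Y$. Getting that indexing right, and correctly identifying which vertices are loyal in the resulting cover, is the only subtle point; everything else is routine verification that the two stated bijections compose.
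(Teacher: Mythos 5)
Your final construction is exactly the paper's: the sets are $\{x\}\cup N(x)$ for $x\in X$, minimality follows because each such set contains the loyal element $x$, and balance is preserved because a universal vertex of $X$ corresponds to a set of size $|Y|+1=|V|-|\mathcal{C}|+1$; arriving at it by composing Theorems~\ref{split-analog-thm} and~\ref{Royle-thm} is the same map the paper writes down directly. The only slip is the claim that the loyal vertices are \emph{precisely} the elements of $X$ (a vertex of $Y$ with a unique neighbor in $X$ is also loyal), but since the argument only needs each set to contain some loyal element, nothing breaks.
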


\begin{proof}
  Let $G $ be an $XY$-graph on $n$ vertices with no isolate in $Y$.  For each vertex $x$ in $X$, form a set consisting of  $x$ and its neighbors. The collection $\cal C$ of these sets is a set cover of $X \cup Y$ because $Y$ has no isolates, so each vertex in $Y$ has a neighbor in $X$.    Each set in $\cal  C$ contains precisely one element of $X$, and that element is loyal, so $\cal  C$ is minimal.

Conversely, let   $\cal C$ be a minimal set cover of a set  $V$ with  $n$ elements. Choose a (representative) loyal vertex from each set in $\cal C$,  let $X$ be the collection of this set  of loyal vertices,  and  let  $Y= V-X$.  Form $XY$-graph $G$ by making $xy \in E(G)$ if and only if $x \in X$, $y \in Y$, and $x$ and $y$ are together in a set of $\cal C$.  There are no isolates in $Y$ because every set in $\cal C$ has a loyal element in $X$.   It is easy to see that these functions are inverses, so they provide a bijection.

By construction,  $|{\cal  C}| = |X|$.  There exists a universal vertex in $X$ if and only if the corresponding set of $\cal C$  has $|Y| + 1$  elements.  Since $|Y| + 1 = |V| - |X| + 1 = |V| - |{\cal C}| + 1$, unbalanced $XY$-graphs map to unbalanced minimal set covers.
\end{proof}

\begin{Thm}   There is a balance-preserving bijection between $XY$-graphs on 
 $n$ vertices with no isolates in $Y$ and bipartite posets on  $n$ elements. 
\end{Thm}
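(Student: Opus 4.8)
The plan is to construct the bijection directly rather than compose the two bijections through split graphs, since a direct construction is both cleaner and parallels the proofs already given in this section. Given an $XY$-graph $G$ on $n$ vertices with no isolates in $Y$, I would form a bipartite poset $P$ on the same vertex set by placing the vertices of $X$ at height $0$, the vertices of $Y$ at height $1$, and declaring $x \prec y$ precisely when $x \in X$, $y \in Y$, and $xy \in E(G)$. Since $G$ has no isolates in $Y$, every vertex of $Y$ is comparable to some vertex of $X$, so each such vertex genuinely has height $1$ and $P$ is a legitimate bipartite poset on $n$ elements.

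For the reverse direction, given a bipartite poset $P$ on $n$ elements, let $X$ be the set of height $0$ points, let $Y$ be the set of height $1$ points, and form the $XY$-graph $G$ with $xy \in E(G)$ iff $x \in X$, $y \in Y$, and $x \prec y$ in $P$. Every height $1$ point is comparable to some height $0$ point by definition, so $Y$ has no isolates, and $G$ has $n$ vertices. It is immediate that these two constructions are mutually inverse: the edge relation of $G$ and the comparability relation of $P$ encode exactly the same bipartite incidence data, and the roles of $X$/height $0$ and $Y$/height $1$ are preserved throughout. Hence we have a bijection.

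It remains to check that the bijection preserves balance. An $XY$-graph with no isolates in $Y$ is unbalanced exactly when it has a universal vertex in $X$, i.e. a vertex of $X$ adjacent to every vertex of $Y$; a bipartite poset is unbalanced exactly when it has a full support point, i.e. a height $0$ point comparable to every height $1$ point. Under the construction above, a vertex $x \in X$ is adjacent to every vertex of $Y$ in $G$ if and only if the corresponding height $0$ point of $P$ is comparable to every height $1$ point, so universal vertices in $X$ correspond exactly to full support points in $P$, and vice versa. Therefore unbalanced $XY$-graphs correspond to unbalanced bipartite posets and balanced ones to balanced ones.

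The main obstacle here is essentially bookkeeping rather than a genuine mathematical difficulty: one must be careful that ``no isolates in $Y$'' on the graph side matches ``every point of $Y$ actually has height $1$'' on the poset side, so that the set $Y$ of the $XY$-graph and the set of height $1$ points of the poset really do coincide under the construction (otherwise a vertex of $Y$ with no neighbor would fail to have height $1$ and the correspondence would break). Once that matching is observed, everything else is a direct translation of definitions, and the balance-preservation statement falls out immediately from the characterizations of ``unbalanced'' in the two settings. As an alternative, one could simply compose the bijection of Theorem~\ref{split-analog-thm} with that of Theorem~\ref{poset-bij-thm} and invoke Theorem~\ref{preserve-balance-thm} together with the balance-preservation of Theorem~\ref{poset-bij-thm}, but the direct argument is more transparent and self-contained.
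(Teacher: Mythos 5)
Your construction is exactly the paper's: identify $X$ with the height-$0$ points and $Y$ with the height-$1$ points, translate edges into cover relations, and observe that universal vertices in $X$ correspond to full support points. The argument is correct and matches the published proof, including the observation that ``no isolates in $Y$'' is what guarantees every element of $Y$ lands at height $1$.
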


\begin{proof}
   Let $G $ be an $XY$-graph on $n$ vertices with no isolate in $Y$.   Form a bipartite poset $P$ by placing each element of $X$ at height 0 and making $x \prec y$ if and  only if $x\in X$, $y \in Y$ and $xy \in E(G)$.  Since in graph $G$ there are no isolates in $Y$, we know that the elements of $Y$ are at height 1 in bipartite poset $P$.  

Conversely, let $P$ be a height 1 poset with $n$ elements.  Let $X$ be the set of height 0 points and $Y$ be the set of height 1 points.  Form $XY$-graph $G$ by making $xy \in E(G)$ if and only if $X \in X$, $y \in Y$, and $x \prec y$ in $P$.  Since the elements of $Y$ are at height 1 in $P$, there are no isolates in $Y$ in graph $G$.   It is easy to see that these functions are inverses, so they provide a bijection.  

Moreover, a vertex in graph $G$ is a universal vertex of $X$ if and only if it is a full support point in the corresponding bipartite poset $P$.  Thus the bijection given preserves balance.
\end{proof} 

\section{Compilation Theorem for Split Graphs} \label{comp-for-split-g}

In this section we give a direct proof of Theorem~\ref{compil-split-thm}. The inspiration for this proof came from the bijections between split graphs and the other classes considered in this paper. 

\bigskip
\noindent
\emph{Proof of Theorem~\ref{compil-split-thm}.} \ 
Let $G$ be an unbalanced split graph with $n$ vertices and fix an $S$-max $KS$-partition of $G$. By Remark~\ref{swing-rem} there exists a swing vertex $s\in S$. Let $S'=S-\{s\}$ and $K'$ be the set of vertices in $K$ that are adjacent to a vertex in $S'$. Then $K'\cup S'$ is a split graph partition of a graph $H$ with $|V(H)|\leq n-1$. Since each vertex in $K'$ is adjacent to a vertex in $S'$, there are no swing vertices in $K'$ and thus this partition of $H$ is $S$-max. 

Conversely, let $H$ be a split graph with $|V(H)|\leq n-1$. Let $K'S'$ be an $S$-max    partition of $H$. Form graph $G$ with vertex set $K\cup S$ as follows: let $S=S'\cup \{s\}$, where $s$ is a new vertex, and let $K $ consist of the vertices in $K'$ plus enough additional vertices to make $|K|+|S|=n$. The edge set of $G$ consists of the edges of $H$, edges between each pair of distinct vertices in $K$, and an edge between $s$ and each vertex in $K$.  Since $s$ is a swing vertex in this partition, $G$ is unbalanced and the partition is $S$-max. The second mapping reverses the first, and thus we have a bijection. 
\qed

\section{Concluding Remarks}
\label{concl}

In this paper we have presented compilation theorems for split graphs, minimal set covers, $XY$-graphs, and bipartite posets.  We can define compilation theorems in more general settings as follows.  Let $({\cal G})_n$ be a  class of combinatorial objects  on a ground set of size $n$, let  $ ({\cal G})_{\le n-1} = ({\cal G})_0 \cup ({\cal G})_1 \cup ({\cal G})_2 \cup  \cdots \cup ({\cal G})_{n-1}$, and 
and $({\cal H})_n$ be a subclass  of  $({\cal G})_n$ whose members contain an additional natural property.   We say the pair $({\cal G}, {\cal H})$ has a compilation theorem if $|({\cal H})_n| = |({\cal G})_{\le n-1}|$ for each $n \ge 1$.  For example, Theorem~\ref{compil-split-thm} is the compilation theorem for the pair (split graphs, unbalanced split graphs).   The compilation theorems discussed in this paper all generate the sequence $1, 2, 4, 8, 17, 38, 94, 
\ldots$, but other sequences are possible. 
 For example, consider the sequence $1, 2, 4, 8, 16, 32, 64, \ldots$.  Let $({\cal G})_n$ be the set of subsets of an $n$-set and $({\cal H})_n$ be the set of non-empty subsets of an $n$-set.  Then $|({\cal H})_n| = 2^n - 1$  and $|({\cal G})_{\le n}| = 1 + 2 + 4 + \cdots + 2^{n-1} = 2^n-1$, so there is a compilation theorem for the pair $({\cal G}, {\cal H})$.  In future work we hope to find compilation theorems in other contexts.

 \end{document}